\title{Isovariant mappings of degree 1 and the Gap Hypothesis}
\author{Reinhard Schultz}
\address{Department of Mathematics\\
University of California at Riverside\\\newline
Riverside CA 92521\\
USA}
\email{schultz@math.ucr.edu}
\urladdr{}
\let\xysavmatrix\xymatrix
\def\xymatrix{\disablesubscriptcorrection\xysavmatrix}
\newcommand{\X}{\mskip1mu\relax}
\def\cnewtheorem#1[#2]#3{\newtheorem{#1}{#3}[section]
\expandafter\let\csname c@#1\endcsname\c@thm}
\newtheorem{thm}{Theorem}[section]       
\newtheorem*{gaphyp}{Standard Gap Hypothesis}
\theoremstyle{definition}
\newtheorem*{rem}{Remark}            
\newtheorem*{ack}{Acknowledgments}            
\newtheorem*{claim}{Assertion}
\newcommand{\tinycirc}{{^{{\,}_{\text{\rm o}}}}}
\newcommand{\brh}{{}_{B\!R}H}
\begin{document}

\begin{asciiabstract}
Unpublished results of S Straus and W Browder state that two notions
of homotopy equivalence for manifolds with smooth group actions -
isovariant and equivariant - often coincide under a condition called
the Gap Hypothesis; the proofs use deep results in geometric topology.
This paper analyzes the difference between the two types of maps from
a homotopy theoretic viewpoint more generally for degree one maps if
the manifolds satisfy the Gap Hypothesis, and it gives a more homotopy
theoretic proof of the Straus-Browder result.
\end{asciiabstract}

\begin{abstract}
Unpublished results of S Straus and W Browder state that two notions
of homotopy equivalence for manifolds with smooth group
actions---isovariant and equivariant---often coincide under a
condition called the Gap Hypothesis; the proofs use deep results in
geometric topology.  This paper analyzes the difference between the
two types of maps from a homotopy theoretic viewpoint more generally
for degree one maps if the manifolds satisfy the Gap Hypothesis, and
it gives a more homotopy theoretic proof of the Straus--Browder
result.
\end{abstract}

\maketitle

In order to motivate and explain our results more thoroughly, we
shall begin with various pieces of background material.  Readers
who would rather focus on the main results and their proofs may
go directly to \fullref{stMnRes}.

\section{Background}

Ever since the topological classification of surfaces was discovered, 
one basic theme in geometric topology has been the reduction of existence 
and classification questions for manifolds to problems in algebraic
topology.  A collection of techniques known as \emph{surgery theory}
has been particularly effective in this regard (compare Rothenberg \cite[pages
375--378]{R}).  For well over four decades topologists have also known 
that such techniques also have far reaching implications for manifolds 
with group actions (cf Browder \cite{Br1} and \cite[pages 378--379]{R}). 
Not surprisingly, many of the most striking applications of surgery 
theory require some assumption on the manifolds, mappings or 
structures under consideration, and for group actions the following
restriction has been employed quite often:

\begin{gaphyp}
For each pair of isotropy subgroups $H \supsetneqq K$ and each 
pair of components $B\subset M^H$, $C\subset M^K$ such that
$B\subsetneqq C$ we have $ \dim B+1 \leq {\frac{1}{2}}(\dim C )$.
\end{gaphyp}

A condition of this sort first appeared explicitly in the 
unpublished Berkeley doctoral dissertation of Sandor H 
Straus \cite{St}, and the importance and usefulness of the 
restriction became apparent in work of T Petrie
\cite{P1,P2}; see also Dovermann--Petrie \cite{DP}, 
Dovermann--Rothenberg \cite{DR}, and L\"uck--Madsen \cite{LuMa}.  
The reason for this is that the Gap Hypothesis automatically 
implies that certain maps are \emph{isovariant} (cf Palais \cite{Pa}):  
A mapping of $f\co X\to Y$ of spaces with actions
of a group $G$ is said to be isovariant if it is equivariant --- so 
that $f(g\cdot x) = g\cdot f(x)$ for all $g\in G$ and $x\in X$ --- and
for each $x$ the isotropy subgroup $G_x$ of all group elements fixing 
$x$ is equal to the isotropy subgroup $G_{f(x)}$ of the image point
(in general one can only say that the first subgroup is contained in
the second).  Specifically, surgery theory yields equivariant mappings 
of the form $G/K\times S^q\to G\cdot M^K$ where $q\leq {\frac{1}{2}}
\dim M^K$, and at one crucial step in the process one uses the fact
that such maps are equivariantly homotopic to isovariant maps by general 
position if the Gap Hypothesis holds. Usually this is expressed
in other terms (eg, the image of $\{H\}\times S^q$ lies in 
$G\cdot M^H$ and is
disjoint from $G\cdot M^L$ for all isotropy subgroups properly
containing a conjugate of $H$), but it is straightforward to verify 
that the usual version is equivalent to the one stated here in
terms of isovariance.

Applications of surgery to group actions that do not require the Gap
Hypothesis frequently assume that the underlying maps of manifolds 
are isovariant or \emph{almost isovariant} (cf Browder--Quinn
\cite{BQ}, the final section of Dula--Schultz \cite{DuS}, the second 
part of Schultz \cite{Sc3}, and Weinberger's book \cite{We}).  The general 
notion of almost isovariance is defined precisely on page 27 of
\cite{DuS}, and the most important special case is reproduced below.  
For the time being, we merely note that

\begin{enumerate}
\item
the sets of nonisovariant points ($G_x\neq G_{f(x)}$) for such a map
may be pushed into very small pieces of the domain where they cause
no problems,

\item
standard methods of homotopy theory extend directly to a suitably 
defined category of almost isovariant mappings (cf \cite{DuS} 
and Dror Farjoun \cite{DF}),

\item
results of \cite{DuS} show that almost isovariant homotopy and isovariant
homotopy are equivalent in many important cases (including all 
smooth actions of finite cyclic $p$--groups), and a standard conjecture
(believed by most workers in the area) states that the same is true 
for arbitrary smooth actions of finite groups.
\end{enumerate}

\noindent
In fact, isovariant techniques play a central role in classification
results for group actions when the Gap Hypothesis fails; in many cases
where such machinery is not used explicitly, the work can readily be 
interpreted in these terms.  A fundamentally important breakthrough in 
this area was due to Browder and Quinn \cite{BQ} (see also the commentary 
on the latter in Hughes--Weinberger \cite{HW}), and a more general discussion 
of the situation in the smooth category --- which also extends earlier work on 
the smooth classification of topologically linear actions on 
spheres --- appears in \cite[Section~II.1]{Sc3} (see also the final 
section of \cite{DuS}).  In the piecewise 
linear and topological categories, there is a distinct body of results
which is largely based on techniques from controlled topology
(eg Weinberger's book \cite{We}, the survey articles by 
Hughes--Weinberger \cite{HW} and Cappell--Weinberger \cite{CW}, and 
the doctoral dissertation of A Beshears \cite{Bsh}).  A full 
historical account of the topic is beyond the scope of this paper, 
but some additional references include the work of A Assadi with 
Browder \cite{AsB} and P Vogel \cite{AV}, the monograph by L Jones 
on symmetries of disks \cite{J}, and the material on symmetries of 
aspherical manifolds in Weinberger's paper on higher Atiyah--Singer 
invariants \cite{We2}.

\subsection*{Focus of this paper}

In this paper we are particularly interested in the following 
unpublished result, which is due to Straus \cite{St} for actions 
that are semifree (the group acts freely off the fixed point set)
and W Browder \cite{Br2} more generally. It implies a fairly strong,
general and precise connection between almost isovariance and the 
Gap Hypothesis.

\begin{thm}\label{thm1}
Let $f\co M\to N$ be an equivariant homotopy equivalence of 
connected, compact, unbounded \textup{( =~~{closed})}
and oriented smooth $G$--manifolds that 
satisfy the Gap Hypothesis.  Then $f$ is equivariantly homotopic to 
an almost isovariant homotopy equivalence.  
\end{thm}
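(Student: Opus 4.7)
The plan is to work by induction on the (finite) poset of conjugacy classes of isotropy subgroups occurring in $M$, ordered so that deeper strata (larger isotropy) come first. At the inductive step for the class $(H)$, I would assume $f$ has already been equivariantly deformed so as to be almost isovariant on the union of all strictly deeper strata $M_{>(H)}$, and then modify $f$ by a further equivariant homotopy supported in an equivariant tubular neighborhood of $M_{(H)}$ so that the new map is almost isovariant on $M_{\ge(H)}$. Because each modification is by an equivariant homotopy, the hypothesis that $f$ be an equivariant homotopy equivalence is automatically inherited at every stage, so the only real issue is to produce the almost isovariant structure.

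The inductive step splits into two parts. First, I would \emph{normally straighten} $f$ in a tubular neighborhood of $M_{>(H)}$: using compatible equivariant tubes in both $M$ and $N$ together with the isovariance already established on deeper strata, arrange that $f$ commutes with the slice exponentials and the associated normal representations. Second, on the complement, the problem reduces to deforming the restriction $f^H\co M^H \to N^H$, rel the fixed collar, so that it sends $M_{(H)}\cap M^H$ into the open complement $N^H\setminus N^{>H}$ outside an arbitrarily small neighborhood of $M_{>(H)}$; this is precisely almost isovariance on $M_{(H)}$.

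The key homotopy-theoretic input for the pushoff is a Blakers--Massey style connectivity estimate for the pair $\bigl(N^H,\,N^H\setminus N^{>H}\bigr)$. The Gap Hypothesis gives $\dim N^K + 1 \le \tfrac12 \dim N^H$ for every $K\supsetneq H$, which is precisely the numerical bound needed to guarantee that a map from a manifold of dimension $\dim M^H = \dim N^H$ can be deformed off $N^{>H}$ rel boundary. When $N^{>H}$ itself has nontrivial stratification the argument must be phrased inside a diagram category indexed by the orbit type poset, and Blakers--Massey must be applied iteratively level by level; this is what the diagram-cohomology and deleted cyclic reduced product machinery listed among the paper's keywords is designed to control.

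The main obstacle will be arranging this Blakers--Massey pushoff to be compatible with the isovariance and normal straightening already imposed on deeper strata. A naive general-position argument fails because the deformation at the $(H)$-stage can in principle disturb the already-straightened normal structure along $M_{>(H)}$; this is why one must carefully control the homotopy inside the collar and work in a diagram category rather than with individual spaces. Once this compatibility is arranged, iterating the inductive step down through the poset of isotropy types produces an equivariant homotopy from $f$ to an almost isovariant map, which is still a homotopy equivalence by construction.
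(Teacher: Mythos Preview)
Your overall architecture---straighten $f$ near the deep strata, then push the remaining nonisovariant locus into a tube---matches the paper's, but the mechanism you propose for the pushoff is where the argument breaks. You claim that the Gap Hypothesis bound $\dim N^K + 1 \le \tfrac12\dim N^H$ is ``precisely the numerical bound needed'' so that a Blakers--Massey connectivity estimate for $(N^H, N^H\setminus N^{>H})$ lets you deform a map from a manifold of dimension $\dim M^H=\dim N^H$ off $N^{>H}$ rel boundary. It is not: that pair is only about $(\tfrac12\dim N^H)$--connected, while the domain has full dimension $\dim N^H$, so obstruction theory in the target alone cannot close the gap. More decisively, the paper exhibits Browder's degree-$1$ examples (\fullref{normStraight}) which are already normally straightened near the fixed set and satisfy the Gap Hypothesis, yet for which the nonisovariant locus \emph{cannot} be pushed into a tube. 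So no argument that uses only the connectivity of the target pair and the Gap Hypothesis can work here; the homotopy-equivalence hypothesis must be used actively, not merely ``inherited.''

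The paper's actual pushoff (Proposition~\ref{prop52}, in the semifree case) goes as follows: make $f$ equivariantly transverse to $N^G$, so the nonisovariant set is a submanifold $V_\alpha$ of dimension $\dim N_\alpha<\tfrac12\dim M$; then compose $f|V_\alpha$ with the \emph{homotopy inverse} $N_\alpha\to M_\alpha$ to see that the inclusion $V_\alpha\hookrightarrow M$ is homotopic into $D(M_\alpha)$; finally invoke embedding theory in the metastable range (this is where the Gap Hypothesis enters) to upgrade that homotopy to an ambient isotopy carrying $V_\alpha$ into the tube. Blakers--Massey and diagram cohomology appear only afterwards (Proposition~\ref{prop42}), to pass from ``nonisovariant locus in a tube'' to genuine isovariance. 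Your sketch also omits the bundle-theoretic input (Kawakubo's stable equivariant fiber homotopy equivalence of tangent bundles, then destabilization via the Gap Hypothesis) that the paper uses to achieve normal straightening for a homotopy equivalence in the first place. Finally, the deleted cyclic reduced products in the keywords are an \emph{application} of the theorem (Theorem~\ref{thm4}), not machinery used in its proof.
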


As noted above, in some cases the results of \cite{DuS} allow one to
replace ``almost isovariant'' by ``isovariant'' in the conclusion; 
in particular, this is true if the isotropy subgroups are normal and
linearly ordered by inclusion.

Although \fullref{thm1} is a purely homotopy theoretic statement,
the proofs in \cite{St} and \cite{Br2} require fairly deep results 
from Wall's nonsimply connected surgery theory \cite{Wl}, which in 
turn depends upon other deep geometric results such as the 
classification theory for immersions (cf Phillips \cite{Ph}) 
and the Whitney process for geometrically eliminating pairs of double 
points that cancel algebraically (cf 
Milnor's monograph on the $h$--cobordism theorem \cite{Mi}).  It 
is natural to ask
whether one can prove \fullref{thm1} without relying so extensively on 
such a large amount of auxiliary material (this is a special case of 
the classical scientific maxim called \emph{Ockham's Razor}).  In 
particular, since one can construct a version of obstruction theory 
for isovariant
maps and define obstructions to finding an isovariant deformation of
a given equivariant map \cite{DuS}, it is natural to search for a proof 
that is related to this obstruction theory.  More generally, one
would also like to understand the obstructions to isovariance for 
arbitrary equivariant mappings of degree $\pm 1$ from one smooth 
manifold to another.  Some basic test cases are examples of equivariant 
degree one mappings mentioned in \cite{Br2} which are not equivariantly 
homotopic to isovariant maps; results of K\,H Dovermann on isovariant 
normal maps \cite{Dov} also provide some motivation.

The main objective of this paper is to analyze the problem of 
deforming an equivariant degree one map into an isovariant map 
when the Gap Hypothesis holds, to use this criterion to provide 
an essentially homotopy-theoretic proof of \fullref{thm1}, and to see 
how the criterion applies to equivariant homotopy equivalences
and other basic examples.  In contrast to \cite{St} and \cite{Br2}, 
our approach requires a minimum of input from geometric topology; 
namely, \emph{nonequivariant} transversality and standard results
on smooth embeddings in the general position range.  For the sake
of clarity we shall restrict attention to finite group actions
that are semifree in the sense described above; if $G$ is cyclic
of prime order, then all actions are semifree.  We shall also 
discuss some applications of \fullref{thm1} to cyclic reduced products
that were first considered in \cite{St} and a few positive and 
negative results just outside the range of the Gap Hypothesis 
(further information on the latter will appear in sequels to 
this paper).

\begin{ack} 
I am extremely grateful to Bill Browder 
for helpful conversations and correspondence regarding his results 
on the questions treated here, and especially for providing a detailed 
account of his counterexamples that appear in \fullref{normStraight} of this paper.   
I would also like to thank Heiner Dovermann for various conversations 
involving his work.  Comments on the referee for enhancing the exposition 
of the paper were also valuable and appreciated.  The research in this 
paper was partially supported by National Science Foundation Grants DMS 
86-02543, 89-02622 and 91-02711, and also by the Max--Planck--Institut 
f\"ur Mathematik in Bonn, and sources are also gratefully acknowledged.
\end{ack}

\section{Statements of main results}\label{stMnRes}

Suppose that $M$ and $N$ are closed, 
oriented, semifree smooth $G$--manifolds satisfying the Gap Hypothesis 
such that all components of the fixed point sets, and suppose that 
$f\co M\to N$ is a $G$--equivariant map of degree 1.  Let $\{N_\alpha\}$ 
denote the set of components of the fixed point set $N^G$, where we 
may as well assume 
that $\alpha$ runs through the elements of $\pi_0(N^G)$, and suppose 
that the associated map $f^G$ of fixed point sets defines a 1--1 
correspondence between the components of $M^G$ and $N^G$; for 
each $\alpha$ let 
$$M_\alpha~~=~~f^{-1}(N_\alpha)~\cap~M^G$$
and let $f_\alpha\co M_\alpha\to N_\alpha$ denote the partial 
map of fixed point sets determined by $f$.
Denote the equivariant normal bundles of $M_\alpha$ and 
$N_\alpha$ in $M$ and $N$ by $\xi_\alpha$ and $\omega_\alpha$
respectively, and let $S(\nu)$ generically represent the unit
sphere bundle of the vector bundle $\nu$ (with the associated
group action if $\nu$ is a $G$--vector bundle).

\begin{thm}\label{thm2}
Suppose we are given the setting above such that $\dim M_\alpha = 
\dim N_\alpha$ for each $\alpha$.  

{\rm(i)}\qua If $f$ is homotopic to an isovariant map, then for
each $\alpha$ the map $f_\alpha$ has degree $\pm 1$, and 
$S(\xi_\alpha)$ is equivariantly fiber homotopy equivalent 
to $S(f^*\omega_\alpha)$. 

{\rm(ii)}\qua If the two conditions in the preceding statement hold,
then $f$ is equivariantly homotopic to a map that is isovariant
on a neighborhood of the fixed point set.

{\rm(iii)}\qua If $f$ is isovariant on a neighborhood of the fixed point
set, then $f$ is equivariantly homotopic to an isovariant map if
and only if $f$ is equivariantly homotopic to a map $f_1$
for which the set of nonisovariant points 
of $f_1$ is contained in a tubular neighborhood of $M^G$.
\end{thm}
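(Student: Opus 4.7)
The forward implication is immediate: if $f$ is equivariantly homotopic to an isovariant map $g$, then the nonisovariant set of $g$ is empty, so $g$ serves as the required $f_1$ for any tubular neighborhood of $M^G$.

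For the converse, start with an equivariant homotopy $F\co M\times I\to N$ from $f$ to $f_1$ whose nonisovariant set is contained in a tubular neighborhood $U$ of $M^G$. The plan is to produce, through successive equivariant homotopies, an isovariant map in the equivariant homotopy class of $f$. The first step is to modify $f_1$ by a cutoff construction on $F$: choose a tubular neighborhood $V\subset U\cap W$ of $M^G$ (where $W$ is the neighborhood on which $f$ is already isovariant), a slightly larger tube $V'\supset \overline V$ still inside $U\cap W$, and a smooth $G$--invariant cutoff $\phi\co M\to [0,1]$ with $\phi\equiv 1$ on $V$ and $\phi\equiv 0$ outside $V'$. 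Setting $f_1'(x) = F(x,\,1-\phi(x))$ yields a map equivariantly homotopic to $f_1$ that equals $f$ on $V$ (hence isovariant there), equals $f_1$ outside $V'$ (hence isovariant outside $U$ as before), and whose remaining nonisovariant points lie in the annular region $A = U\setminus V$, which is equivariantly a sphere--bundle cylinder $S(\xi)\times[0,1]$ carrying a free $G$--action.

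The substantive step is to equivariantly homotope $f_1'|_A$ relative to $\partial A$ to a map sending $A$ into $N\setminus N^G$. By the Gap Hypothesis one has $\dim N^G \le \frac{1}{2}\dim N - 1$, so $N^G$ has codimension at least $\frac{1}{2}\dim N + 1$ in $N$ and the pair $(N, N\setminus N^G)$ is approximately $\frac{1}{2}\dim N$--connected. Since $G$ acts freely on $A$, the equivariant obstructions reduce to ordinary obstructions on the free quotient $A/G$ with coefficients in $\pi_*(N, N\setminus N^G)$: the connectivity supplied by the Gap Hypothesis annihilates the low--degree obstructions, and the top--degree obstructions are controlled by comparison with the equivariant homotopy $F|_{A\times I}$, whose initial value $f|_A$ already maps into $N\setminus N^G$. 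Carrying this out and then concatenating all the intermediate homotopies yields an isovariant map in the equivariant homotopy class of $f$. The hard part is this middle step: executing the equivariant general position / obstruction--theoretic argument that pushes the preimage of $N^G$ entirely out of $A$ while fixing the boundary values, using both the codimension provided by the Gap Hypothesis and the preexisting isovariant behavior of $f$ on $V$ to trivialize the top--dimensional obstruction.
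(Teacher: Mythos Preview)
Your proposal only addresses part (iii); parts (i) and (ii) are left untouched.  I will confine my comments to (iii), since that is where the substance lies.

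The cutoff construction producing $f_1'$ is fine, and in fact it does a bit of bookkeeping that the paper leaves implicit: it converts the hypothesis ``$f$ is isovariant near $M^G$ and $f\simeq f_1$ with the nonisovariant set of $f_1$ in a tube'' into a single map whose nonisovariant set lies in an honest annular collar $A\cong S(\xi)\times[0,1]$ on which $G$ acts freely.

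The gap is in the compression step.  You propose to push $f_1'|_A$ into $N\setminus N^G$ \emph{relative to the full boundary} $\partial A$, with obstructions in
\[
H^i\bigl(A/G,\partial A/G;\ \pi_i(N,N\setminus N^G)\bigr)\ \cong\ H^{i-1}\bigl(S(\xi)/G;\ \pi_i(N,N\setminus N^G)\bigr).
\]
The Gap Hypothesis only gives $(q_\alpha-1)$--connectivity of $(N,N\setminus N^G)$ with $q_\alpha-1\approx \tfrac{1}{2}\dim N$, so the coefficient groups vanish for $i\le q_\alpha-1$ but are nonzero throughout the range $q_\alpha\le i\le n=\dim A$.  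Since $S(\xi)/G$ is $(n-1)$--dimensional, those cohomology groups are not zero in general: you have roughly half the obstruction range still to account for, not just a single ``top'' obstruction.  Your appeal to $F|_{A\times I}$ does not help, for two reasons: $f|_A$ need not land in $N\setminus N^G$ (you only know $f$ is isovariant on $W$, and $A=U\setminus V$ may protrude beyond $W$), and in any case the homotopy coming from $F$ moves the outer boundary $\partial U$ and is therefore not rel $\partial A$, so it cannot be used to transport a compression.

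The paper's argument is organized differently precisely to avoid this.  Rather than fix both boundary pieces, it treats $f$ on the annulus as a map of \emph{triads}
\[
(T_\alpha;\ \partial_0T_\alpha,\ \partial_1T_\alpha)\ \longrightarrow\ \bigl(C_\alpha;\ D(N_\alpha),\ N\setminus N^G\bigr)
\]
and asks for a compression into $\bigl(N\setminus N^G;\ S(N_\alpha),\ N\setminus N^G\bigr)$, i.e.\ the inner boundary $\partial_0T_\alpha$ is allowed to move within $D(N_\alpha)$ and is only required to end in $S(N_\alpha)$.  The resulting obstructions live in diagram (Bredon) cohomology groups which, after a Blakers--Massey comparison of $\pi_*(D(N_\alpha),S(N_\alpha))$ with $\pi_*(C_\alpha,N\setminus N^G)$, reduce to $H^i(T_\alpha,\partial_1T_\alpha;\,-)$.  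These vanish identically because $T_\alpha\cong \partial_1T_\alpha\times[0,1]$.  A short exact--sequence argument handles the one borderline degree $i=n=2q_\alpha-2$.  The point is that by loosening the constraint on the inner boundary (from ``fixed'' to ``lands in $D(N_\alpha)$, must end in $S(N_\alpha)$'') the relevant relative cohomology becomes that of a cylinder rel one end, which is trivially zero; your rel--$\partial A$ formulation gives cohomology of a cylinder rel both ends, which is essentially $H^{*-1}(S(\xi)/G)$ and is not.
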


\fullref{thm1}  will follow immediately from \fullref{thm2} and the
specialization of the the latter to equivariant homotopy
equivalences.

\begin{thm}\label{thm3}
In the setting of the previous result, suppose that $f$ is an
equivariant homotopy equivalence.  Then $f$ is equivariantly 
homotopic to an isovariant homotopy equivalence.
\end{thm}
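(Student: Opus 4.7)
The plan is to derive \fullref{thm3} as a direct consequence of \fullref{thm2}, by checking each of its hypotheses in the special case of an equivariant homotopy equivalence.

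First, the dimension condition and the degree condition from \fullref{thm2}(i) come for free: an equivariant homotopy equivalence restricts to a homotopy equivalence $f^G\co M^G\to N^G$, so the components correspond with matching dimensions and each $f_\alpha$ has degree $\pm 1$. For the equivariant fiber homotopy equivalence of $S(\xi_\alpha)$ with $S(f^{\ast}\omega_\alpha)$, I would use an equivariant homotopy inverse $g$ of $f$, straightened so that $g$ carries a tubular neighborhood of $N_\alpha$ onto one of $M_\alpha$; the induced equivariant map of Thom complexes, combined with equivariant Atiyah duality, produces the required fiber equivalence. Hence \fullref{thm2}(ii) applies, and we may replace $f$ by an equivariantly homotopic map that is isovariant on a closed equivariant tubular neighborhood $U$ of $M^G$.

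To conclude via \fullref{thm2}(iii) I must arrange that every nonisovariant point of $f$ lies in $U$, or equivalently that $f|_{M\setminus\Int U}$ can be equivariantly deformed, rel $\partial U$, into $N\setminus N^G$. Since $G$ acts freely on $M\setminus M^G$ and $N\setminus N^G$, this is a relative obstruction problem on orbit spaces, with obstructions in $H^i\bigl((M\setminus\Int U)/G,\,\partial U/G;\,\pi_i(N,N\setminus N^G)\bigr)$. By the Gap Hypothesis the coefficient groups vanish for $i<\operatorname{codim}N^G\ge\tfrac12\dim N+1$, so only high-degree obstructions can survive.

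The main obstacle is showing that these remaining obstructions vanish, for it is here that the hypothesis of an equivariant homotopy equivalence, rather than a bare degree one map, has to do essential work. My plan is to identify each obstruction, via the equivariant fiber homotopy equivalence supplied by \fullref{thm2}(i), with the analogous obstruction for the identity map of $N$, which is visibly zero; since $f$ and $f^G$ are both equivalences, the comparison map on the relevant cohomology groups is an isomorphism, and this identification forces the obstructions to be trivial. Once they vanish, \fullref{thm2}(iii) applies, and the resulting isovariant map is automatically an equivariant homotopy equivalence because it is equivariantly homotopic to $f$.
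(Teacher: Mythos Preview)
Your overall strategy---verify the hypotheses of \fullref{thm2}---is the same as the paper's, and your treatment of the degree condition is fine.  For the normal-bundle condition you are in the right neighborhood (the paper uses Kawakubo's equivariant version of Atiyah's tangent-bundle theorem), but note that any Atiyah-duality argument only produces a \emph{stable} equivariant fiber homotopy equivalence; the paper then invokes the Gap Hypothesis to destabilize, using that the stabilization map $F_G(V_\alpha)\to F_G$ is $(q_\alpha-2)$--connected and $\dim M_\alpha\le q_\alpha-2$.  Your sketch omits this step.

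The serious gap is in your argument for condition~(iii).  The proposed comparison with the identity map of $N$ is circular.  Naturality of obstruction theory lets you pull obstructions back along a map of \emph{domains} covering the same target map; to compare your obstruction for $f|_{M\setminus\Int U}$ with the (trivially zero) obstruction for $\text{id}|_{N\setminus\Int U_N}$ you would need a map of pairs $(M\setminus\Int U,\partial U)\to(N\setminus\Int U_N,\partial U_N)$ over $(N,N\setminus N^G)$.  Any such map already lands in $N\setminus N^G$, so it already solves the compression problem you are trying to solve.  Knowing that $f$ and $f^G$ are equivalences tells you the relevant cohomology \emph{groups} are abstractly isomorphic, but it does not tell you that the obstruction \emph{class} for $f$ corresponds to the zero class under that isomorphism.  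The paper handles this step (\fullref{prop52}) by a genuinely different, geometric argument: make $f$ equivariantly transverse to $N^G$, so the nonisovariant locus is a smooth submanifold $V_\alpha$ of dimension $\dim M_\alpha$; compose $f|V_\alpha$ with a homotopy inverse $N_\alpha\to M_\alpha$ to see that the inclusion $V_\alpha\hookrightarrow M$ is equivariantly homotopic to a map into $D(M_\alpha)\setminus M_\alpha$; then use the Gap Hypothesis to put this in the stable range for smooth embeddings, so the inclusion is actually equivariantly \emph{isotopic} to an embedding in a tubular neighborhood.  The homotopy-inverse trick is exactly where the hypothesis ``equivariant homotopy equivalence'' does its work, but it is used geometrically, not via naturality of obstructions.

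Finally, your last sentence conflates two notions.  An isovariant map that is equivariantly homotopic to $f$ is automatically an \emph{equivariant} homotopy equivalence, but the theorem demands an \emph{isovariant} homotopy equivalence---one with an isovariant homotopy inverse.  That is the content of \fullref{prop53}, which the paper proves separately using the isovariant Whitehead theorem of \cite{DuS} together with a direct check that the induced map $M\setminus M^G\to N\setminus N^G$ is a homotopy equivalence.
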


\textbf{Note}\qua If $f$ is an equivariant homotopy equivalence,
then some of the assumptions in the setting and statement of
\fullref{thm2} are redundant because an equivariant homotopy
equivalence determines a 1--1 correspondence of fixed point
set components and the dimensions of corresponding components 
are also equal because the components are homotopy equivalent.

\subsection*{Application to deleted reduced products}

Given a topological space $X$ and an integer $n$, the $n$--fold 
cyclic reduced product is defined to be the quotient
of the product space $X^n$ (ie $n$ copies of $X$)
modulo the action of ${\mathbb Z}_n$ on the latter by permuting
coordinates, and the deleted cyclic reduced product is the subset
of the latter obtained by removing the image of the diagonal
$\Delta(X^{n})$ consisting of those points whose coordinates
are all equal.  In his thesis \cite{St} Straus used his version of 
\fullref{thm1} to obtain the following homotopy invariance 
property for such spaces:

\begin{thm}\label{thm4}
Let $M$ and 
$N$ be closed smooth manifolds of dimension $\geq 2$, let $p$ be 
an odd prime, and suppose that $M$ and $N$ are homotopy equivalent.  
Let ${\mathbb Z}_p$ act smoothly on the $p$--fold self products $\Pi^p M$ 
and $\Pi^pN$ (where $\Pi^pX= X\times\cdots\times X,\ p\ factors$) by 
cyclically permuting the coordinates, and let ${\text{\bf D}}^p(M)$, 
${\text{\bf D}}^p(N)$ be the invariant subsets sets given by removing 
the diagonals from $\Pi^pM$ and $\Pi^pN$. Then the deleted reduced 
cyclic products ${\text{\bf D}}^p(M)/{\mathbb Z}_p$ and 
${\text{\bf D}}^p(N)/{\mathbb Z}_p$ are homotopy equivalent.
\end{thm}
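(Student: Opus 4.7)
The plan is to obtain the homotopy equivalence of deleted reduced products by applying \fullref{thm3} to the $p$-fold self products with their cyclic permutation actions and then descending to orbit spaces. Concretely, I would start with a homotopy equivalence $h\co M\to N$ and form the $p$-fold product map $\Pi^p h\co\Pi^p M\to\Pi^p N$. This map is automatically $\bz_p$-equivariant with respect to cyclic permutation of factors, and it induces a homotopy equivalence on the fixed point sets (which are just the diagonal copies of $M$ and $N$, with $(\Pi^p h)^G=h$) as well as on the underlying nonequivariant product spaces; by the standard criterion for equivariant homotopy equivalences between $G$-CW complexes, $\Pi^p h$ is therefore a $\bz_p$-equivariant homotopy equivalence.

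Next I would verify that the hypotheses of \fullref{thm3} are satisfied. Since $p$ is odd, the cyclic shift of $p$ factors of $M$ moves one factor of dimension $\dim M$ past $p-1$ others, and because $p-1$ is even the induced map on top exterior powers is $+1$; thus $\Pi^p M$ and $\Pi^p N$ are oriented closed connected semifree $\bz_p$-manifolds. The only nontrivial isotropy pair is $(\bz_p,\{1\})$, with fixed component of dimension $\dim M$ sitting inside the full product of dimension $p\cdot\dim M$, so the Gap Hypothesis amounts to $\dim M+1\leq \tfrac{p}{2}\dim M$; for $p=3$ this reduces to $\dim M\geq 2$, and for $p\geq 5$ it is automatic as soon as $\dim M\geq 1$. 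Applying \fullref{thm3} then produces an equivariant homotopy $\Pi^p h\simeq F$, where $F\co\Pi^p M\to\Pi^p N$ is an isovariant homotopy equivalence.

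Finally, because $F$ is isovariant, it sends points of trivial isotropy to points of trivial isotropy; hence $F({\text{\bf D}}^p(M))\subseteq {\text{\bf D}}^p(N)$ and the restriction is still $\bz_p$-equivariant. The same applies to an isovariant homotopy inverse and to the connecting isovariant homotopies, so $F$ restricts to a $\bz_p$-equivariant homotopy equivalence between the free $\bz_p$-spaces ${\text{\bf D}}^p(M)$ and ${\text{\bf D}}^p(N)$, which descends to the desired homotopy equivalence ${\text{\bf D}}^p(M)/\bz_p\simeq {\text{\bf D}}^p(N)/\bz_p$ on orbit spaces. The substantive work is packaged in \fullref{thm3}; the remaining conceptual points are the orientation check (the parity of $p-1$) and the Gap Hypothesis verification, after which the descent to quotients is formal because semifree actions of a prime-order group are free away from the fixed set. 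The main obstacle, in the sense that it is the only non-automatic ingredient, is therefore having \fullref{thm3} available in the first place.
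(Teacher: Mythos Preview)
Your argument is correct and follows essentially the same route as the paper: form the product map, check semifreeness and the Gap Hypothesis, invoke the main deformation theorem, and pass to orbit spaces. The one noteworthy difference is that the paper splits off the case $\dim M=2$ and handles it by the classification of surfaces (taking $f$ to be a homeomorphism, hence $\Pi^p f$ isovariant on the nose), whereas you observe---correctly---that for $p=3$, $\dim M=2$ the Gap Hypothesis inequality $\dim M+1\le\tfrac{p}{2}\dim M$ holds with equality, so \fullref{thm3} applies directly without a separate low-dimensional argument. Your orientation check (that the cyclic shift preserves orientation because $p-1$ is even) is a detail the paper's sketch leaves implicit.
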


As also noted in \cite{St}, this result does not extend to compact 
bounded manifolds, and in fact closed unit disks of different 
dimensions generate simple counterexamples.  The results of Schultz
\cite{Sc2} imply that \fullref{thm4} extends to simply connected manifolds  
if $p=2$, but recent results of R Longoni and P Salvatore \cite{LS} 
imply that the result does not extend to 3--dimensional lens spaces
when $p = 2$.  Further results on the relationship between
${\text{\bf D}}^2(M)/{\mathbb Z}_2$ and ${\text{\bf D}}^2(N)/
{\mathbb Z}_2$ for homotopy equivalent manifolds appear in a
paper by P L\"offler and R\,J Milgram \cite{LoMi}.

Since the statement and proof of \fullref{thm4} have apparently not 
appeared previously in print, we shall outline the (fairly 
straightforward) argument for the sake of completeness.

\begin{proof}[Sketch of proof]  
We shall first prove the result when $\dim M = \dim N \geq 3$ and
then dispose of the remaining cases afterwards.
If $f\co M\to N$ is a homotopy 
equivalence then $\Pi^pf\co \Pi^pM\to\Pi^pN$ is an equivariant 
homotopy equivalence of closed smooth ${\mathbb Z}_p$--manifolds.  
All actions of ${\mathbb Z}_p$ are semifree if $p$ is prime, so this 
condition holds automatically.  Furthermore, the fixed point sets
of the action on $\Pi^pM$ and $\Pi^pN$ are the respective diagonals 
$\Delta(\Pi^pM)\cong M$ and $\Delta(\Pi^pN)\cong N$, and since
$$\dim\Delta^p(\Pi^pX)~~=~~\dim X~~=~~\left(\dim \Pi^pX\right)/p~~\leq$$
$${\textstyle{\frac{1}{3}}} \dim \Pi^pX~~<~~
{\textstyle{\frac{1}{2}}}\dim \Pi^pX~-~1$$
if $X = M$ or $N$ is at least 3--dimensional and $p$ is odd, then 
the Gap Hypothesis also holds.
Therefore \fullref{thm1} implies that $\Pi^pf$ is
equivariantly homotopic to an isovariant homotopy equivalence,
and the latter in turn yields an equivariant homotopy equivalence 
from ${\text{\bf D}}^p(M)$ to ${\text{\bf D}}^p(N)$.  The induced 
map of orbit spaces is the desired homotopy equivalence from
${\text{\bf D}}^p(M)/{\mathbb Z}_p$ to ${\text{\bf D}}^p(N)/
{\mathbb Z}_p$.

Suppose now that $\dim M = \dim N \leq 2$.  In these cases
homotopy equivalent manifolds are homeomorphic, so we can take
the homotopy equivalence $f\co M\to N$ to be a homeomorphism.  It 
follows immediately that $\Pi^p f$ is a homeomorphism and as 
such is automatically isovariant.  One can now complete the proof 
as in the last two sentences of the preceding paragraph.
\end{proof}

\subsection*{Further results}

The results of \cite{Br2} also include a uniqueness statement (up
to isovariant homotopy) if $M \times [0,1]$ and $N\times [0,1]$ 
satisfy the Gap Hypothesis.  One can also use the methods of this
paper together with some additional geometric and homotopy theoretic 
input to prove such a uniqueness result.  The necessary machinery to 
do so will be developed in a subsequent paper.

Since the results of \cite{Br2} also apply to actions that are not
necessarily semifree, it is natural to ask whether the methods
of this paper extend.  The answer is yes, but a proof would
require the introduction of a considerable amount of extra
machinery that would take time to develop and might obscure
the main ideas, and this is a major reason for sticking with
the semifree case.  A brief discussion of some tools needed
to carry out such extensions appears at the end of \fullref{Prelim},  
and we plan to pursue this further in another sequel to this
article.

\subsection*{Overview of the paper}

We shall begin \fullref{Prelim} by proving that the conditions in \fullref{thm2}
are necessary for a map $f$ as above to be properly homotopic to
an isovariant map.  The proof of sufficiency in \fullref{thm2} splits into 
two steps, which are carried out in Sections \ref{Prelim} and \ref{normStraight}.  To motivate 
the first step, observe that an equivariant map of 
semifree $G$--manifolds is automatically isovariant on the fixed point 
set, so a natural starting point is to determine whether the given
map can be equivariantly deformed to a map that is isovariant on a
neighborhood of the fixed point set.  If this is possible and we
have a map with this additional property, the next step is to determine
whether such a map can be further deformed to another one which is
isovariant everywhere.  \fullref{equivar} contains the 
proofs of Theorems \ref{thm1} and \ref{thm3}.  Finally, in \fullref{FinRem}
 we shall discuss a variety of questions related to the main results.  
Some concern the interaction between equivariance and isovariance
when the Gap Hypothesis holds, while others involve borderline 
situations in which the Gap Hypothesis inequalities fail, but only
by a small amount.  There are obvious questions about the extent
to which \fullref{thm1} extends to such examples, and answers 
to such questions turn out to have close connections to issues in 
equivariant surgery theory in such borderline cases.  Sometimes
the latter yields generalizations of \fullref{thm1} to situations
where the conclusion is not particularly obvious from a 
homotopy theoretic viewpoint, and in other cases homotopy theory
has implications for equivariant surgery theory.

\section{Preliminary adjustments and necessity}\label{Prelim}

It will be convenient to begin with some notational conventions and
elementary observations in order to simplify the main discussion and 
the proofs.

Let $P$ be a closed smooth $G$--manifold, where $G$ is a 
finite group.  By local linearity of the action we know that the fixed
point set $P^G$ is a union of connected smooth submanifolds;
as before, denote these connected components by $P_\alpha$.  
For each $\alpha$ let $D(P_\alpha)$ denote a closed tubular neighborhood.
By construction these sets are total spaces of closed unit disk bundles
over the manifolds $P_\alpha$, so let $S(P_\alpha)$ and
denote the associated unit sphere bundles; it follows that
$$\partial [D(P_\alpha)]~~=~~S(P_\alpha)~.$$
Suppose now that $M$ and $N$ are smooth semifree $G$--manifolds 
and $f\co M\to N$ is an equivariant mapping.

\subsection*{The ``only if'' direction of \fullref{thm2}}

Assume that we have the setting and notation introduced in order 
to state \fullref{thm2}:

\begin{enumerate}
\item
$M$ and $N$ are compact, oriented, semifree 
smooth $G$--manifolds satisfying the Gap Hypothesis.

\item
$f\co M\to N$ is a $G$--equivariant map of degree 1. 

\item
$\{N_\alpha\}$ denotes the set of components of 
$N^G$ where we may as well assume that $\alpha$ runs through 
the elements of $\pi_0(N^G)$.

\item
The associated map $f^G$
of fixed point sets defines a 1--1 correspondence between the 
components of $M^G$ and $N^G$. 

\item
If for each $\alpha$ we let 
$$M_\alpha~~=~~f^{-1}(N_\alpha)\cap M^G$$
then $f_\alpha$ is the continuous map from $M_\alpha$ to $N_\alpha$ 
determined by $f$.

\item
If the equivariant normal bundles of $M_\alpha$ and 
$N_\alpha$ in $M$ and $N$ are $\xi_\alpha$ and $\omega_\alpha$
respectively, and let $S(\nu)$ and $D(\nu)$ generically represent 
the unit sphere and disk bundle of the vector bundle $\nu$ (with 
the associated group action since $\nu$ is a $G$--vector bundle).
\end{enumerate}

Not surprisingly, we shall also use the notational conventions
we have previously developed and mentioned.

\begin{proof}[Necessity proof for \fullref{thm2}]
Each of the first two basic conditions  
in \fullref{thm2} depends only
on the equivariant homotopy class of a mapping of manifolds, 
so without loss of generality we may replace $f$ by any map 
in the same proper equivariant homotopy class.  In particular, 
since we are assuming that $f$ is properly equivariantly homotopy 
to an isovariant map, we might as well assume that $f$ itself is 
isovariant.

By the results of \cite{DuS} (in particular, see Proposition 4.1 
on page 27), the map $f$ is isovariantly homotopic to a map $f_0$ 
such that for each $\alpha$ we
have $f(\X D(M_\alpha)\,)\subset D(N_\alpha)$,
$f(\X S(M_\alpha\,))\subset S(N_\alpha)$, and
$$f\left(\X M - \cup_\alpha~{\text{\rm Int\,}} D(M_\alpha)\X
\right)~~\subset~~
N - \cup_\alpha~{\text{\rm Int\,}} D(N_\alpha)~.$$
For each choice of $\alpha$ let 
$$h_\alpha: \bigl(\, D(M_\alpha),S(M_\alpha)\,\bigr)~\longrightarrow~
\bigl(\, D(M_\alpha),S(M_\alpha)\,\bigr)$$ 
be the associated map of pairs 
defined by $f_0$.  Since the latter has degree 1, the same 
is true for each of the maps $h_\alpha$.  We have already noted 
that $D(-)$ and $S(-)$ are disk and sphere bundles over the 
appropriate components of fixed point sets, and therefore a simple 
spectral sequence argument implies that $(a)$ the degrees of the 
maps $f_\alpha$ are all equal to $\pm 1$, up to an equivariant
homotopy of pairs the map $h_\alpha$ sends a spherical fiber in 
$S(M_\alpha)$ to a spherical fiber in $S(N_\alpha)$ by a map of 
degree $\pm 1$.  Therefore an equivariant analog of a classical 
result due to A Dold \cite{Dol} (cf Waner \cite{Wn})
shows that there is a $g$--equivariant fiber homotopy equivalence 
from $S(\xi_\alpha)$ to $S(f_\alpha^*\,\omega_\alpha)$, where as 
before $\xi_\alpha$ and $\omega_\alpha$ denoted the corresponding 
equivariant normal bundles for $M_\alpha$ and $N_\alpha$.  This 
completes the proof that Condition (i) holds. Since the set of 
nonisovariant points for an isovariant map is empty by definition, 
Condition (iii) is automatically true, so the proof is complete.
\end{proof}

\subsection*{Some examples}

It is not difficult to construct equivariant maps of degree 
1 which satisfy the Gap Hypothesis but do not satisfy the 
statements in Condition (i) of \fullref{thm2} on degrees 
and equivariant normal bundles.

\begin{exm}\label{ex1}
Let $V$ be a nontrivial semifree real representation of $G$ 
such that $\dim V^G > 0$ and the Gap Hypothesis holds, and 
let $S^V$ be the one point compactification, which is equivariantly 
homeomorphic to the unit sphere in $V\oplus {\mathbb R}$.  It is well 
known that for each positive integer $k$ there is a $G$--equivariant 
map $h_k\co S^V\to S^V$ such that $\deg h_k = 1$ and $\deg h_k^G = 
k|G|+1$ (eg, this is a very special case of the equivariant 
Hopf Theorem stated in tom Dieck's book on the Burnside ring and 
equivariant homotopy theory \cite[Theorem~8.4.1, pages~213--214]{tD}).  
Since the fixed point set is connected and the 
degree of the map on the fixed point set is not $\pm 1$ if $|G|>2$ 
or $k \neq 0$, it follows that $h_k$ cannot be homotopic to an 
isovariant map.  However, the map $h_k$ does satisfy the second 
part of Condition (i) involving pullbacks of equivariant normal 
bundles because the equivariant normal bundle of $(S^V)^G$ in $S^V$ 
is a product bundle.
\end{exm}

\begin{exm}\label{ex2}
Let $G$ be a cyclic group, assume that $k$, $m$ and $r$ satisfy 
$k\equiv 0 (4)$, $k, m > 0$  and $2r > m + k$, and let $\gamma$ 
be a complex $r$--dimensional vector bundle over $S^k$ which 
represents a generator of $\pi_k(BU_r)\cong {\mathbb Z}$.  Take $M$ 
to be the associated $(2r + m)$--sphere bundle over $S^k$.   Then 
$G$ acts smoothly and fiber preservingly on $M$ with fixed point 
set $S^k\times S^m$, and each point has an invariant open 
neighborhood on which the action of $G$ is smoothly equivalent 
to the linear representation $V = {\mathbb R}^{k+m}\oplus {\mathbb C}^r$.  
If we collapse everything outside such a neighborhood to a point, 
we obtain an equivariant map of degree 1 from $M$ to $S^V$.  This 
map also has degree 1 on the fixed point sets, but we claim it 
does not satisfy the pullback condition for equivariant normal 
bundles.  Since the equivariant normal bundle in $M$ is the pullback 
of $\gamma$ under the coordinate projection map from $S^k\times 
S^m\to S^m$, it will suffice to check that $\gamma$ is not 
equivariantly fiber homotopically trivial.  In fact, the underlying 
\emph{nonequivariant} vector bundle is well known to be stably fiber 
homotopically \emph{nontrivial} (see, for example, Adams \cite{Ad}).
\end{exm}

\section{Normal straightening and relative isovariance obstructions}\label{normStraight}

In this section we shall prove the implication of \fullref{thm2} 
in the other direction; namely, if Conditions (ii) is satisfied then 
one can make $f$ isovariant near the fixed point set, and if (iii) is 
satisfied then the map $f$ is equivariantly homotopic to an isovariant 
mapping.  The first step is to examine the consequences of Condition (ii).

\begin{prop}\label{prop41}
In the setting of \fullref{stMnRes}, suppose that $f\co M\to N$
is a continuous equivariant degree $1$ map.
Assume that each of the associated maps of fixed 
point components $f_\alpha\co M_\alpha\to N_\alpha$ has degree $\pm 1$ 
and that $S(\xi_\alpha)$ is equivariantly fiber homotopy equivalent 
to $S(f^*\,\omega_\alpha)$ for each $\alpha$.  Then there are closed,
pairwise disjoint, equivariant tubular neighborhoods $D(M_\alpha)$
of the fixed point set components $M_\alpha$ and an equivariant
mapping $f_0$ such that $f_0$ is equivariantly homotopic to $f$ and 
for each $\alpha$ the restriction $f|D(M_\alpha)$ is isovariant.
\end{prop}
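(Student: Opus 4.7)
The plan is to construct, on each equivariant tubular neighborhood of a fixed-point component, an explicit isovariant bundle-map target $g_\alpha$ covering $f_\alpha$, equivariantly homotope the restriction $f|D(M_\alpha)$ to $g_\alpha$ locally, and then extend the resulting local deformation to a global equivariant homotopy by the equivariant homotopy extension property. The essential input from the hypotheses will be the fiber homotopy equivalence $S(\xi_\alpha) \simeq S(f_\alpha^*\,\omega_\alpha)$; the degree $\pm 1$ condition on each $f_\alpha$ enters only to guarantee that the coned-off target has the correct global degree.

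First choose smooth equivariant tubular neighborhoods $D(N_\alpha)$ of the components $N_\alpha$. Since $f(M_\alpha) = N_\alpha$ lies in $\Int D(N_\alpha)$, continuity of $f$ together with compactness of $M_\alpha$ allows one to select equivariant tubular neighborhoods $D(M_\alpha)$ small enough that $f(D(M_\alpha)) \subset D(N_\alpha)$ for every $\alpha$. Next, compose the given fiber homotopy equivalence $\varphi_\alpha\colon S(\xi_\alpha) \to S(f_\alpha^*\,\omega_\alpha)$ with the natural bundle map $S(f_\alpha^*\,\omega_\alpha) \to S(\omega_\alpha)$ covering $f_\alpha$ to obtain an equivariant sphere-bundle map $\tilde\varphi_\alpha\colon S(\xi_\alpha) \to S(\omega_\alpha)$ over $f_\alpha$, and cone it off fiberwise by sending $(x,tv)$ to $(f_\alpha(x),\,t\,\tilde\varphi_\alpha(x,v))$ for $x \in M_\alpha$, $v\in S(\xi_\alpha)_x$, and $t\in [0,1]$. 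This produces an equivariant map $g_\alpha\colon D(\xi_\alpha)\to D(\omega_\alpha)$ whose zero section is $f_\alpha$ and which carries $S(\xi_\alpha)$ into $S(\omega_\alpha)$. Because the action is semifree, preserving the sphere bundle forces $g_\alpha$ to be isovariant on $D(M_\alpha)$.

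Both $f|D(M_\alpha)$ and $g_\alpha$ are now equivariant maps $D(M_\alpha)\to D(N_\alpha)$ agreeing with $f_\alpha$ on $M_\alpha$, so they can be connected by a standard shrink-and-reexpand homotopy: letting $r_s$ denote the equivariant radial scaling of $D(M_\alpha)$ with $r_0 = $ identity and $r_1$ the projection onto $M_\alpha$, set $H_t = f\circ r_{2t}$ for $t \leq 1/2$ and $H_t = g_\alpha\circ r_{2-2t}$ for $t \geq 1/2$. The two pieces agree at $t = 1/2$ (each equaling $f_\alpha$ composed with the disk-bundle projection), the image stays in $D(N_\alpha)$ throughout, and $H_1 = g_\alpha$. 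Finally, invoke the equivariant homotopy extension property for the closed equivariant subpair $\bigcup_\alpha D(M_\alpha) \subset M$ to extend the disjoint union of these local homotopies to a global equivariant homotopy of $f$ ending at a map $f_0$ whose restriction to each $D(M_\alpha)$ equals $g_\alpha$ and is therefore isovariant. I expect the main obstacle to be precisely the middle step, where the fiber homotopy equivalence hypothesis is converted into a concrete isovariant geometric model; once $g_\alpha$ has been produced, every other step is routine, relying only on equivariant tubular neighborhoods and standard homotopy extension.
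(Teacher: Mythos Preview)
Your proof is correct and follows essentially the same route as the paper: compose the given fiber homotopy equivalence with the canonical bundle map over $f_\alpha$, extend radially (fiberwise cone) to obtain an isovariant map on each $D(M_\alpha)$, observe this is equivariantly homotopic to $f|D(M_\alpha)$, and apply the equivariant Homotopy Extension Property. You supply more detail than the paper does---in particular the explicit shrink-and-reexpand homotopy and the preliminary choice of small $D(M_\alpha)$ so that $f(D(M_\alpha))\subset D(N_\alpha)$---but the underlying argument is the same.
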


\begin{proof}
For each $\alpha$ choose an equivariant 
fiber homotopy equivalence $h_\alpha\co S(\xi_\alpha)\to
S(f_\alpha^*\,\omega_\alpha)$, and let $k_\alpha$ be the composite
of $h_\alpha$ with the canonical induced bundle mapping
$S(f_\alpha^*\,\omega_\alpha)\to S(\omega_\alpha)$.  Define 
a map $H_\alpha\co 
D(M_\alpha)\to D(N_\alpha)$ using $k_\alpha$ and fiberwise radial
extension.  It follows that $H_\alpha$ is equivariantly homotopic
to $f|D(M_\alpha)$ for each $\alpha$, and hence by the equivariant
Homotopy Extension Property we may deform $f$ equivariantly to a map
$f_0$ such that $f_0|D(M_\alpha) = H_\alpha$ for each $\alpha$.
Since each $H_\alpha$ is isovariant, it follows that the restriction
of $f_0$ to a neighborhood of the fixed point set is isovariant as
required.
\end{proof}

We shall conclude this section by proving the sufficiency of the
condition in (iii).  A key step in the proof of \fullref{thm1} 
will be to prove that an equivariant homotopy equivalence has this
property.

\begin{prop}\label{prop42}
Let $M$ and $N$ be as before, and suppose that $f\co M\to N$ is a
continuous equivariant map that is isovariant on a neighborhood
of the fixed point set.  If there is a system of closed tubular 
neighborhoods $W_\alpha$ of $M^G$ such that the set of nonisovariant
points lies in the interiors of the sets $W_\alpha$, then $f$ is
equivariantly homotopic to an isovariant map.
\end{prop}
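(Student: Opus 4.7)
The plan is to concentrate the failure of isovariance into a region where $G$ acts freely, reduce to an obstruction-theoretic problem on the orbit space, and kill the obstructions using the Gap Hypothesis. First I would shrink the neighborhood on which $f$ is isovariant to produce, for each component $M_\alpha$, a closed tubular neighborhood $V_\alpha \subset \operatorname{Int}(W_\alpha)$ on which $f$ is isovariant. Then the compact ``annular'' region $A_\alpha := W_\alpha \setminus \operatorname{Int}(V_\alpha)$ carries a free $G$-action, its two boundary components $\partial V_\alpha$ and $\partial W_\alpha$ lie in the isovariance region of $f$, and every nonisovariant point of $f$ lies in $\operatorname{Int}(A_\alpha)$ for some $\alpha$. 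It therefore suffices to construct, for each $\alpha$ separately, an equivariant deformation of $f|_{A_\alpha}$ rel $\partial A_\alpha$ that makes the map isovariant.

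Passing to orbit spaces, where $G$ acts freely on $A_\alpha$, this is the same as deforming $\bar f\colon A_\alpha/G \to N/G$ rel $\partial(A_\alpha/G)$ to a map into $(N \setminus N^G)/G$. I would then apply ordinary transversality on the orbit space: after a small perturbation supported in the interior of $A_\alpha/G$, I may assume $f|_{A_\alpha}$ is transverse to $N^G$, so that $B_\alpha := f^{-1}(N^G) \cap A_\alpha$ is a compact submanifold of $\operatorname{Int}(A_\alpha)$ of codimension $\dim N - \dim N_\alpha$, hence of dimension $\dim N_\alpha$. The Gap Hypothesis forces $\dim B_\alpha = \dim N_\alpha \leq \frac{1}{2}\dim N - 1$, well below the middle dimension of $A_\alpha$.

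The crux is to remove the preimage $B_\alpha/G$ by a deformation of $\bar f$. Choosing a small equivariant tubular neighborhood $T_\alpha$ of $B_\alpha$ in $\operatorname{Int}(A_\alpha)$ whose image under $f$ lies in the disk bundle neighborhood $D(\omega_\alpha)$ of $N_\alpha$, the map on $T_\alpha$ factors (up to equivariant homotopy rel $\partial T_\alpha$) through the normal bundle data, and the task becomes one of deforming a map of pairs $(T_\alpha/G, \partial T_\alpha/G) \to (D(\omega_\alpha)/G, S(\omega_\alpha)/G)$ into the second factor. The obstructions lie in cohomology of $(T_\alpha/G, \partial T_\alpha/G)$ with coefficients in homotopy groups of the homotopy fibre of $S(\omega_\alpha)/G \hookrightarrow D(\omega_\alpha)/G$. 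Using the Blakers--Massey Theorem together with the high codimension of $N^G$ in $N$ (at least $\frac{1}{2}\dim N + 1$ by the Gap Hypothesis) and the low dimension of $B_\alpha$, every such obstruction group vanishes. Reassembling these local deformations with the identity elsewhere produces the desired equivariant homotopy from $f$ to an isovariant map.

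The main obstacle is the obstruction-theoretic computation: one must verify, with correct equivariant bookkeeping, that the homotopy fibre in question is sufficiently connected and that the relevant obstruction groups vanish across the full dimensional range that arises from $\dim A_\alpha/G$. A secondary technical concern is gluing the local deformations across $\partial V_\alpha$ and $\partial W_\alpha$ to yield a globally defined equivariant homotopy rel the ``good'' set $V_\alpha \cup (M \setminus \operatorname{Int}(W_\alpha))$.
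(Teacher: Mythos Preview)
Your obstruction-theoretic claim at the end is where the argument breaks. After you make $f$ transverse to $N^G$ and take a tubular neighborhood $T_\alpha$ of $B_\alpha$ inside $A_\alpha$, you assert that the obstructions to compressing $(T_\alpha/G,\partial T_\alpha/G)\to (D(\omega_\alpha)/G,S(\omega_\alpha)/G)$ into $S(\omega_\alpha)/G$ rel $\partial T_\alpha/G$ all vanish by a dimension count. They do not. By the Thom isomorphism one has $H^{i}(T_\alpha,\partial T_\alpha;-)\cong H^{\,i-q_\alpha}(B_\alpha;-)$ where $q_\alpha=\dim N-\dim N_\alpha$, while the pair $(D(\omega_\alpha),S(\omega_\alpha))$ is only $(q_\alpha-1)$--connected. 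So the very first obstruction sits in $H^{0}(B_\alpha;\pi_{q_\alpha}(D(\omega_\alpha),S(\omega_\alpha)))\cong H^{0}(B_\alpha;\mathbb{Z})$, and transversality forces it to be $\pm 1$ on each component of $B_\alpha$: a normal disk to $B_\alpha$ maps with local degree $\pm 1$ onto a fiber of $(D(\omega_\alpha),S(\omega_\alpha))$, which is exactly what it means for $f$ to be transverse. In other words, once you have localized to a tube around $B_\alpha$ and frozen $\partial T_\alpha$, you are trying to null-homotope an essential map, and no connectivity estimate will save you. (A toy case: a single free $G$--orbit of disk fibers in $D(\omega_\alpha)$, mapped by inclusion, satisfies all your hypotheses and obviously cannot be pushed off $N_\alpha$ rel boundary.)

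The paper's proof avoids this trap by \emph{not} localizing to a neighborhood of the transverse preimage. It works on the whole annulus $T_\alpha\cong \partial_1 T_\alpha\times[0,1]$ and keeps only the \emph{outer} boundary $\partial_1 T_\alpha$ fixed, allowing the inner boundary to move within $D(N_\alpha)$; this is exactly what the triad/diagram formalism from \cite{DuS} is set up to encode. The Blakers--Massey theorem is then used to reduce the diagram cohomology to ordinary Bredon groups $\brh^{i}(T_\alpha,\partial_1 T_\alpha;\pi_i(C_\alpha,N-N^G))$, and these vanish \emph{because} $T_\alpha$ deformation retracts onto $\partial_1 T_\alpha$ --- a fact that has no analogue for your pair $(T_\alpha,\partial T_\alpha)$. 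So the essential ingredient you are missing is not a sharper dimension count but the freedom to move the inner boundary, together with the product structure of the annulus; the transversality step actually throws this structure away.
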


Note that we make no assumption about the images of the sets $W_\alpha$,
and in particular we do not assume that they lie in the tubular
neighborhoods of the components of the fixed point set of $N$.

\begin{proof}
By assumption $f$ is already isovariant on the 
closed complement of a submanifold $T$ of the form 
$\cup_\alpha~S(M_\alpha)\times [{\frac{1}{2}},1]$ where as usual
$S(M_\alpha)$ denotes the boundary of a tubular neighborhood.
Let $T_\alpha$ be the portion lying over $M_\alpha$ and denote
the boundary components corresponding to $S(M_\alpha)\times 
\{{\frac{1}{2}}\}$
and $S(M_\alpha\times \{1\}$ by $\partial_0T_\alpha$ and 
$\partial_1 T_\alpha$ respectively.  Let
$$B_\alpha~~=~~M~-~\cup_{\beta\neq \alpha}\;M_\beta\qquad\qquad 
C_\alpha~~=~~N~-~\cup_{\beta\neq \alpha}\;N_\beta$$
let $S_\alpha$ be the  spherical fiber of $S(M_\alpha)$,
and let $A_\alpha$ correspond to the annulus 
$S_\alpha\times [{\frac{1}{2}},1]$.

For each value of $j$ the map $f$ determines a map of triads
$$f_\alpha\co(T_\alpha;\partial_0T_\alpha,\partial_1
T_\alpha)~\longrightarrow~(C_\alpha;D(N_\alpha),N-N^G)$$
and by the results of Dula--Schultz \cite[Section~4]{DuS} it suffices 
to show that each such map of
triads can be compressed equivariantly rel $\partial_1
T_\alpha\cup A_\alpha$ into the triad
$(N-N^G;S(N_\alpha),N-N^G)$.  The 
methods of \cite[Sections~1 and~5]{DuS} imply that the 
obstructions to compression lie in diagram-theoretic Bredon 
equivariant cohomology groups of the form $\brh^i({\text{\bf T}};
\varPi_i)$, 
where {\bf T} is the diagram associated to the triad
$(T_\alpha;\partial_0T_\alpha,\partial_1T_\alpha\cup A_\alpha)$
and $\varPi_i$ is the following diagram of abelian groups:
$$
\begin{CD}
\pi_i(D(N_\alpha),S(N_\alpha))@>\partial_0^{\#}>>
\pi_i(C_\alpha,N-N^G)@<\partial_1^{\#}<<\{0\}
\end{CD}
$$
If $q_\alpha=\dim M-\dim F_\alpha$ then $(D(N_\alpha),
S(N_\alpha))$ 
is $(q_\alpha-1)$--connected
by the identity $\pi_s(D(N_\alpha),S(N_\alpha))
\cong\pi_{s-1}(S_\alpha)$, and
a standard general position argument shows that 
$(C_\alpha,N-N^G)=(C_\alpha,C_\alpha-N_\alpha)$
is also $(q_\alpha-1)$--connected.  Therefore the Blakers--Massey Theorem 
(see Gray \cite[page~143]{Gr}) implies
that the map $\partial^{\#}_0$ is bijective if 
$i\leq 2q_\alpha-3$ and surjective
if $i=2q_\alpha-2$.  In particular, this means that 
the equivariant diagram cohomology groups 
$\brh^i({\text{\bf T}};\varPi_i)$ reduce to ordinary 
Bredon cohomology groups $\brh^i(T_\alpha, 
\partial_1 T_\alpha;\pi_i(C_\alpha,N-N^G))$ if $i\leq n-1$
or if $i=n$ and $\dim F_\alpha+3\leq q_\alpha$.  
Since $T_\alpha\cong\partial_1 T_\alpha\times
[0,1]$ it follows immediately that the
relative cohomology groups vanish in all such cases.  Since 
$\dim T_\alpha= \dim M=\dim N$, this implies that the isovariance 
obstructions vanish in all cases except perhaps when 
$i=n=2q_\alpha-2$.  In such cases the value group fits
into the following exact sequence, which arises by restricting
diagram-theoretic cochains in $C(X'\to X; \pi'\to \pi)$ to ordinary 
cochains in $C(X';\pi')$:
$$
\begin{CD}
(\star)\qquad H^{n-1}\left(\partial_0 T_\alpha;\pi_n(D(\alpha_\alpha'),
S(\alpha_\alpha'))\,\right) @>\Delta >> H^{n}(T_\alpha,\partial T_\alpha;\pi_n(N_\alpha,N-N^G))\\
@. @VVV\\
@. \brh^n({\text{\bf T}};\varPi_n)\\
@. @VVV\\
@. H^{n}\left(\partial_0 T_\alpha;\pi_n(D(\alpha_\alpha'),S(\alpha_\alpha'))\,\right)
\end{CD}
$$
The map $\Delta$ in this sequence is given by combining the coefficient 
homomorphism for the map $\delta^{\#}_0$ in dimension $n$ with the
suspension isomorphism 
$$H^{n-1}(\partial_0 T_\alpha;\pi)~\longrightarrow~ H^{n}(T_\alpha,\partial 
T_\alpha;\pi)~.$$
Therefore the Blakers--Massey Theorem,
the $(n-1)$--dimensionality of $\partial_0 T_\alpha$, and the
Bockstein exact sequence for the short exact sequence
$$
\begin{CD}
0 @>>> {\text{\rm Kernel}} @>>>
\pi_n(D(\alpha_\alpha'),S(\alpha_\alpha'))@>\partial_0^{\#}>>
\pi_n(C_\alpha,N-N^G)@>>>0
\end{CD}
$$
imply that $\Delta$ is onto.  But the last object in $(\star)$ is zero 
because $\dim\partial_0T_\alpha=n-1$, and it follows by exactness that 
$\brh^n({\text{\bf T}};\varPi_n)$ is also trivial.  Therefore the 
isovariance obstructions always vanish.
\end{proof}

The following examples due to Browder \cite{Br2} show that it is not
always possible to deform an equivariant degree 1 map so that it
is isovariant near the fixed point set and the set of nonisovariant
points lies in a tubular neighborhood of the fixed point set.

\begin{exs}
Let $k$ and $q$ be distinct positive 
integers such that $q$ is even and $G$ has a free $q$--dimensional 
linear representation (that is, the group acts freely except
at the zero vector).  Let $N=S^k\times S^q$ with trivial action 
on the first coordinate and the one point compactification of the 
free linear representation on the second, let $M_0$ be the disjoint 
union of $N$ and two copies of the space
$G\times S^k\times S^q$ (where $G$ acts 
by translation on itself and trivially on the other two coordinates), 
and define an equivariant map $f_0\co M_0\to N$ by taking the identity 
on $M$, the unique equivariant extension of the identity map on  
$S^k\times S^q$ over one copy of $G\times S^k\times S^q$, and the unique
equivariant extension of an orientation reversing self diffeomorphism of
$S^k\times S^q$ over the other copy.  By construction this map has degree
one, and one can attach 1--handles equivariantly to $M_0$ away from the
fixed point set to obtain an equivariant cobordism of maps from $f_0$
to a map $f$ on a connected 1--manifold $M$ that is nonequivariantly 
diffeomorphic to a connected sum of $2\cdot|G|+1$ copies of $S^k\times
S^q$.  Since the fixed point sets of $M$ and $N$ are $k$--dimensional and the
manifolds themselves are $(k+q)$--dimensional, it follows that the Gap 
Hypothesis holds if we impose the stronger restriction $q\geq k+2$.  
By construction the map $f$ determines a homotopy equivalence of fixed 
point sets and is isovariant on a neighborhood of the fixed point set.
\end{exs}

\begin{claim}
It is not possible to deform $f$ equivariantly so
that the set of nonisovariant points lies in a tubular neighborhood of
the fixed point set.  In particular, it is also not possible to deform
$f$ equivariantly to an isovariant map.
\end{claim}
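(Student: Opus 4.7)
To prove the claim I would exhibit a homotopy invariant of the equivariant map $f$ that is forced to take a very restrictive form whenever the set of nonisovariant points of $f$ can be pushed into a tubular neighborhood of $M^G$, and then compute the actual value of the invariant and show that it is incompatible with that form.

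Let $N_+:=S^k\times\{0\}$ be one component of $N^G$, and consider the class
$$
\mu\;:=\;\mathrm{PD}_M\bigl(f^*\mathrm{PD}_N[N_+]\bigr)\;\in\;H_k(M),
$$
which depends only on the equivariant homotopy class of $f$ and, after a transverse perturbation, is represented by $f^{-1}(N_+)$. Suppose for contradiction that $f$ is equivariantly homotopic to a map $f_1$ whose nonisovariant set lies in a closed tubular neighborhood $W$ of $M^G$. Because $f_1$ is equivariant we have $f_1(M^G)\subseteq N^G$, so together with the assumption on nonisovariant points $f_1^{-1}(N^G)\subseteq W$, and in particular $\mu$ lies in the image of $H_k(W)\to H_k(M)$. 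Now $M^G$ has two components, both $k$-spheres sitting inside the $N$-summand of $M=\#_{2|G|+1}(S^k\times S^q)$, and under $N\hookrightarrow M$ both represent the same generator $e_0$ of a $\bz$-summand of $H_k(M)\cong\bz^{2|G|+1}$. Hence the image of $H_k(W)\to H_k(M)$ is the rank-one subgroup $\bz\cdot e_0$, so in the natural basis $\{e_0\}\cup\{e_{g,j}:g\in G,\,j=1,2\}$ coming from the $2|G|+1$ summands of the connected sum, $\mu$ would have to have vanishing coefficient on every $e_{g,j}$.

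The core step is to show $\mu$ does \emph{not} have this form. Since the $1$-handles used to pass from $M_0$ to $M$ are attached off $M^G$ and do not affect $H_k$ in the middle range, $\mu$ may be computed directly from the transverse preimage $f_0^{-1}(N_+)$ summand by summand in $M_0$. The preimage decomposes as the disjoint union of $S^k\times\{0\}$ inside the $N$-summand (contributing $+e_0$); for each $g\in G$ the submanifold $\{g\}\times S^k\times\{0\}$ in the first extra copy $G\times S^k\times S^q$, obtained as the preimage of $N_+$ under $(x,y)\mapsto(x,g\cdot y)$ and contributing $\sigma_g\,e_{g,1}$ with $\sigma_g=\det(g|_V)=\pm 1$; and for each $g\in G$ the submanifold $\phi^{-1}(S^k\times\{0\})$ in the second extra copy (where $\phi$ is the orientation-reversing diffeomorphism used in the construction), contributing $\pm\sigma_g\,e_{g,2}$ with a global sign determined by the action of $\phi^*$ on $H^q(S^k\times S^q)$. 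Each of the $2|G|$ coefficients on the $e_{g,j}$ is $\pm 1$, and hence nonzero, contradicting what the previous paragraph would require.

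The step that demands the most care is the orientation bookkeeping. The essential point that makes it work, however, is structural: classes $e_{g,j}$ with distinct labels come from distinct connect summands of $M$, so no cross-summand cancellation can occur, and within each summand the relevant restriction of $f_0$ is a diffeomorphism whose local degree is automatically $\pm 1$. Once this main obstruction is established, the final assertion that $f$ is not equivariantly homotopic to an isovariant map is immediate, because any isovariant map has empty nonisovariant set, which trivially lies in any tubular neighborhood of $M^G$.
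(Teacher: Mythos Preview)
Your argument is correct and is essentially the Poincar\'e dual of the paper's proof, though organized rather differently.  The paper works in $H_q$ rather than $H_k$: it takes a $q$--sphere $X=\{g\}\times\{v\}\times S^q$ coming from one of the extra $G\times S^k\times S^q$ summands, isotopes it off any tubular neighborhood $U\supset M^G$, and observes that the resulting $X'$ consists entirely of isovariant points, so $h(X')\subset N-N^G\simeq S^k\times S^{q-1}$ and hence $h_*[X']=0$ in $H_q(N)$; but by construction $f_*[X]$ is a generator of $H_q(N)$, giving the contradiction.  Your version pulls back a component $N_+\subset N^G$ instead of pushing forward $X$, and the nonvanishing of the $e_{g,j}$--coefficient of $\mu$ is exactly the dual statement to $f_*[S^q]_{g,j}\neq 0$.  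The paper's route is a bit leaner because it sidesteps the orientation bookkeeping entirely: one only needs that a diffeomorphism has degree $\pm 1$ to conclude $f_*[X]\neq 0$, whereas you must track signs through Poincar\'e duality and the connected-sum decomposition (which you do correctly).  One small point you glide over is that after passing to $f_1$ you need a further small perturbation to make it transverse to $N_+$ while preserving $f_1^{-1}(N^G)\subset W$; this is routine since $f_1^{-1}(N^G)$ is closed and contained in the open interior of $W$, but it is worth saying explicitly.
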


\begin{proof}[]
To prove the assertion, assume that one has a map $h$ equivariantly 
homotopic to $f$ with the stated property, and let $U$ be a tubular 
neighborhood of $M^G$ that contains the set of nonisovariant points.  
Let $X$ be a submanifold of the form $\{g\}\times\{v\}\times S^q$ in 
$M$ that arises from one of the copies of $G\times S^k\times S^q$ in 
$M_0$.  Although $X$ and $U$ may have points in common, by the 
uniqueness of tubular neighborhoods we can always isotope $X$ 
into a submanifold $X'$ that is disjoint from $U$.  By the hypotheses 
on $h$ we know that $h(X')$ is disjoint from $N^G=S^k\times S^0$, and 
therefore $h(X')$ is contained in 
$$N - N^G~~\cong~~S^k\times S^{q-1}\times {\mathbb R}$$
so that the image of the generator of $H_q(X') = {\mathbb Z}$ maps
trivially into $H_q(N)$.  However, $h$ is supposed to be homotopic
to a map which is nontrivial on the latter by construction, so we
have a contradiction, and therefore it is not possible to find
an isovariant map $h$ that is equivariantly homotopic to 
$f$.
\end{proof}

A refinement of the preceding argument shows that if $Y$ is a
subset of $M$ such that the image of $H_q(Y)$ in $H_q(M)$ is
equal to the image of $H_q(X)$, then $Y$ must contain some
nonisovariant points of any equivariant map that is equivariantly
homotopic to $f$.

\begin{rem}
By construction, Browder's examples are normally cobordant to the
identity; an explicit normal cobordism from the identity to $f_0$
is given by 
$$W~~=~~N\times [0,1]~\amalg~ G\times S^k\times S^q\times [0,1]$$
where $\partial_-W = N\times \{0\}$ and $\partial_+W$ is the
remaining $2|G|+1$ components of the boundary, and one can obtain
a normal cobordism to $f$ by adding 1--handles equivariantly along
the top part of the boundary.  More generally, results of K H 
Dovermann \cite{Dov} imply that one can always construct equivariant
normal cobordisms to equivariant normal maps if the Gap Hypothesis
holds and the map is already an equivariant homotopy equivalence 
on the singular set as in Browder's examples.
\end{rem}

However, it is also possible to construct examples like Browder's
that are not cobordant to highly connected maps.  It suffices
to let $k \equiv 0 (4)$ and replace $G\times S^k\times S^q$ by
$G\times S(\gamma)$, where the latter is the sphere bundle of
a fiber homotopically trivial vector bundle $\gamma$ over $S^k$
with nontrivial rational Pontryagin classes; one must also
replace the equivariant folding map from $G\times S^k\times S^q$
to $N$ by its composite with the identity on $G$ times a
fiber homotopy equivalence from $S(\gamma)$ to $S^k\times
S^q$.  Characteristic number arguments imply the map obtained
in this fashion is not cobordant to a $k$--connected map.  Of course, 
a degree 1 map of this type does not have the bundle data required 
for a normal map in the sense of equivariant surgery theory.

\subsection*{Generalizations to nonsemifree actions}

We have assumed our actions are semifree in order to keep the
discussion of normal straightening as simple as possible.  In
fact, the main obstruction to proving analogs of Theorems 
\ref{thm1}, \ref{thm2}, and \ref{thm3}
for arbitrary actions is to prove a suitable generalization
of the result on normal straightening.  One step in carrying
this out is to develop some methods for analyzing the 
equivariant fiber homotopy type of certain equivariant vector
bundles.  In the semifree case the base spaces for these
bundles all have trivial $G$--actions, and we were able to
study the bundles directly with a minimum of equivariant 
homotopy theoretic machinery.  However, if the action on the
ambient manifold is not semifree, then we must also consider
equivariant vector bundles over the various fixed point sets
of proper subgroups of $G$, and these bases will usually have
nontrivial group actions.  Working with such objects requires
the sorts of constructions developed by Schultz \cite{Sc4,Sc5} as well
as the full force of S Waner's work on
the equivariant fiber homotopy types of equivariant fiber 
bundles \cite{Wn}.
If $G = {\mathbb Z}_{p^r}$ where $p$ is prime, then the generalization
is particularly direct, and with such results one can prove
complete generalizations of Theorems 
\ref{thm1}, \ref{thm2}, and \ref{thm3} for actions of
such groups (assuming the Gap Hypothesis as usual).

\section{Equivariant homotopy equivalences}\label{equivar}

In this section we shall show that an equivariant homotopy
equivalence can be deformed to satisfy the conditions in parts
(ii) and (iii) of \fullref{thm2} and thus must be 
equivariantly homotopic to an isovariant map, which we shall 
prove must be an isovariant homotopy equivalence.  

\begin{prop}\label{prop51}
Let $f\co M\to N$ be a homotopy equivalence of closed, oriented, 
semifree $G$--manifolds which satisfy the
Gap Hypothesis such that all components of all fixed point 
sets are also orientable.  Then $f$ is equivariantly homotopic
to a map that is isovariant on a neighborhood of the fixed  
point set.
\end{prop}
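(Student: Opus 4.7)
The plan is to verify the two hypotheses of \fullref{prop41} and then apply that proposition directly. For each component $M_\alpha$ of $M^G$ corresponding to $N_\alpha$ under $f^G$, what must be checked is first that $f_\alpha$ has degree $\pm 1$, and second that $S(\xi_\alpha)$ is equivariantly fiber homotopy equivalent to $S(f_\alpha^*\omega_\alpha)$ over $M_\alpha$. The degree statement is essentially automatic: since $f$ is an equivariant homotopy equivalence of semifree $G$--manifolds, Bredon's theorem gives that $f^G\co M^G\to N^G$ is an ordinary homotopy equivalence, so each $f_\alpha$ is a homotopy equivalence between closed oriented manifolds of the same dimension and hence has degree $\pm 1$.

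For the fiber homotopy equivalence I would first use the equivariant homotopy extension property to deform $f$ so that each $D(M_\alpha)$ maps into $D(N_\alpha)$, each $S(M_\alpha)$ into $S(N_\alpha)$, and the closed complement of the tubular neighborhoods into the corresponding closed complement. The restriction $f|D(\xi_\alpha)\co D(\xi_\alpha)\to D(\omega_\alpha)$ is then an equivariant map of closed disk bundles covering $f_\alpha$ on the zero section; since both disk bundles equivariantly deformation retract onto their zero sections and $f_\alpha$ is a homotopy equivalence, Bredon's criterion implies that $f|D(\xi_\alpha)$ is itself an equivariant homotopy equivalence. Factoring this as the composite $D(\xi_\alpha)\to D(f_\alpha^*\omega_\alpha)\to D(\omega_\alpha)$, and observing that the second arrow is an equivariant equivalence because it covers the equivalence $f_\alpha$ and is the identity on each fiber, two-out-of-three shows that the first arrow is an equivariant homotopy equivalence as well.

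To pass from the disk bundles to the sphere bundles, I would apply the equivariant gluing lemma to the pushout decompositions of $M$ and $N$ as tubular neighborhoods of the fixed point sets glued to their closed complements along the sphere bundles. The hypothesis that $f$ is an equivariant equivalence on $M$, combined with the equivariant equivalence on the disk bundles just established and the fact that $f$ restricts to an equivariant equivalence on the free complement $M-M^G\to N-N^G$ (a consequence of the equivariant cofibration $M^G\hookrightarrow M$ together with the equivalences $f$ and $f^G$), forces the induced map $S(\xi_\alpha)\to S(\omega_\alpha)$ to be an equivariant homotopy equivalence. The corresponding factored map $S(\xi_\alpha)\to S(f_\alpha^*\omega_\alpha)$ is then an equivariant homotopy equivalence covering the identity on $M_\alpha$, and by Waner's equivariant analog of Dold's theorem \cite{Wn} (already invoked in the necessity part of \fullref{thm2}) this lifts to an equivariant fiber homotopy equivalence. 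The hypotheses of \fullref{prop41} are thus verified, and its conclusion gives what is required. The main obstacle is the transition from equivariant equivalence of disk bundles to equivariant equivalence of sphere bundles, where one must package the free-action information on $M-M^G$ carefully enough for the equivariant gluing lemma to deliver the equivalence on the boundary; once that is in place, the remaining steps are a direct assembly.
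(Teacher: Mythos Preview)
Your argument has two genuine gaps. First, the deformation you invoke at the outset --- making $f$ send $D(M_\alpha)$ to $D(N_\alpha)$, $S(M_\alpha)$ to $S(N_\alpha)$, and the closed complement to the closed complement --- is not available for a merely equivariant map. One can arrange $f(D(M_\alpha))\subset D(N_\alpha)$ by retracting the tube to the zero section, but forcing the closed complement of the tube to land in $N-\operatorname{Int}D(N_\alpha)\subset N-N^G$ is essentially asking that $f$ already be isovariant off a neighborhood of $M^G$; equivariant maps may freely send points of $M-M^G$ into $N^G$, and the result of \cite{DuS} that straightens maps into triad form takes an \emph{isovariant} map as input. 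Second, your claim that $f$ restricts to an equivalence $M-M^G\to N-N^G$ ``as a consequence of the equivariant cofibration $M^G\hookrightarrow M$'' conflates the cofiber with the open complement: the cofibration gives $M/M^G\simeq N/N^G$, which says nothing about $M-M^G$. In fact that complement statement is essentially what \fullref{prop53} establishes \emph{after} isovariance is in hand. With this step missing you have only two of the four maps in your pushout comparison known to be equivalences, and in any case the gluing lemma runs the other way: it deduces an equivalence on the pushout from equivalences on the three inputs, not an equivalence on the intersection from the rest. A further warning sign is that your argument never uses the Gap Hypothesis.

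The paper's route is quite different and explains where the dimension inequality enters. It invokes Kawakubo's equivariant version of Atiyah's theorem to get a \emph{stable} equivariant fiber homotopy equivalence $S(\tau_M)\sim S(f^*\tau_N)$, restricts to $M^G$, and uses the product splitting of the relevant classifying space as $BF\times BF_G$ to isolate a stable equivalence between the normal sphere bundles $S(\xi_\alpha)$ and $S(f_\alpha^*\omega_\alpha)$. The Gap Hypothesis is then used, via the connectivity of the stabilization map $F_G(V_\alpha)\to F_G$, to destabilize this to an honest equivariant fiber homotopy equivalence, after which \fullref{prop41} applies.
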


\begin{proof}
We shall prove that $f$ satisfies the conditions in part (ii)
of \fullref{thm2}.  Since $f$ defines a homotopy equivalence of
fixed point sets, it follows immediately that for each component
$M_\alpha$ of $M_G$ the restriction $f_\alpha$ of $f$ defines
a homotopy equivalence from $M_\alpha$ to $N_\alpha$ and hence
has degree $\pm 1$.  In order to apply part (ii) of \fullref{thm2},
we also need to verify the homotopy pullback condition on the
equivariant normal bundles of the fixed point set components.

Let $\tau_M$ and $\tau_N$ be the equivariant tangent 
bundles of $M$ and $N$.  We claim that the sphere bundles of 
$\tau_M$ and $f^*\tau_N$ are stably equivariantly fiber homotopically 
equivalent.  The nonequivariant version of this statement is well 
known (cf Atiyah \cite{At}) and the equivariant case is 
due to K Kawakubo \cite{Ka}.

Consider next the restriction of the stable equivariant fiber homotopy 
equivalence $S(\tau_M)\sim S(f^*\tau_N)$ to $M^G$.  The classifying maps
for the two equivariant fibrations go from $M^G$ to a space ${\mathcal B}$ 
such that $\pi_*({\mathcal B})\approx\pi^G_{*-1}$, where the latter denotes an
equivariant stable homotopy group as in Segal \cite{Se}.  On the other hand, 
by \cite{Se} we also know that ${\mathcal B}$ is homotopy equivalent to the
product $BF\times BF_G$  where $BF$ classifies nonequivariant stable spherical
fibrations and $BF_G$ is defined as in Becker--Schultz \cite{BeS}.  In terms of
fibrations the projections of the classifying maps $M\to {\mathcal B}$ onto 
$BF$ and $BF_G$ correspond to taking the classifying maps of the fixed point 
subbundles and the orthogonal complements of the fixed point subbundles
respectively.  Therefore it follows that the corresponding subbundles for
$\tau_M$ and $f^*\tau_N$ are stably equivariantly fiber homotopy equivalent.
In particular, this means that $S(\alpha_M)$ and $S([f^G]^*\alpha_N)$ are 
stably equivariantly fiber homotopy equivalent because they induce 
homotopic maps from $M^G$ into $BF_G$.

As usual, write $M^G$ as a disjoint union of components 
$M_\alpha$, and for each $\alpha$
let $q_\alpha$ be the codimension of $M_\alpha$.  Furthermore, 
denote the fiber representation at a point of $M_\alpha $ by 
$V_\alpha$.  The stabilization map $F_G(V_\alpha)\to F_G$ is 
$(q_\alpha-2)$--connected by the considerations of
[Sc1], and the Gap Hypothesis implies that $\dim M_\alpha\leq q_\alpha-2$.
Therefore we can destabilize the stable fiber homotopy equivalence from
$S(\xi_\alpha)$ to $S([f^G]^*\omega_\alpha)$ and obtain a genuine 
equivariant fiber homotopy  equivalence.  Choose such an equivariant 
fiber homotopy equivalence, say $\Phi$.  It is then an elementary exercise 
to deform $f|\cup_\alpha D(M_\alpha)$ equivariantly relative the zero 
section so that one obtains the radial extension of $\Phi$ at the 
other end of the deformation.  By the equivariant homotopy extension 
property one can extend this homotopy to all of $M$.
\end{proof}

Our choice of fiber homotopy equivalences was arbitrary, but it is
possible to find a canonical choice up to homotopy using equivariant
$S$--duality (see Wirthm\"uller \cite{Wi}) and Kawakubo's result; in
fact, one must work with the latter to prove a uniqueness result for
isovariant deformations as in \cite{Br2}, and we shall explain this in
a subsequent article.

The preceding result and part (iii) of \fullref{thm2} reduce 
the proof of Theorems \ref{thm1} and \ref{thm3} to the following 
two results:

\begin{prop}\label{prop52}
Suppose that $f$ satisfies the conditions of the previous result,
including the condition that $f$ is isovariant on a neighborhood
of the fixed point set.  Then $f$ is homotopic to an almost
isovariant map.
\end{prop}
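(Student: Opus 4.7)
The plan is to deform $f$ equivariantly so that the preimage $f^{-1}(N^G)$ lies inside a tubular neighborhood of $M^G$; once this is achieved, the set of nonisovariant points of the deformed map lies in the same neighborhood, and by part (iii) of \fullref{thm2} together with \fullref{prop42} this gives the ``almost isovariance'' property required by the proposition. By \fullref{prop51} I may assume $f$ is already isovariant on a tubular neighborhood $W = \bigcup_\alpha D(M_\alpha)$ of $M^G$ that is mapped into a corresponding tubular neighborhood $V = \bigcup_\alpha D(N_\alpha)$ of $N^G$, so the task reduces to deforming the restriction
\[
f\co (M - \Int W,\ \partial W)\longrightarrow (N,\ N - N^G)
\]
equivariantly, rel $\partial W$, to a map landing in $N - N^G$.

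Since $G$ acts freely on $M - M^G$ and on $N - N^G$, the equivariant compression obstructions reduce to obstructions on the orbit space with coefficients in the relative homotopy groups $\pi_i(N,\,N-N^G)$. A local analysis using the tubular neighborhoods $D(N_\alpha)$ shows that $(N,\,N - N^G)$ is $(q-1)$-connected, where $q = \min_\alpha q_\alpha$ and $q_\alpha$ is the codimension of $N_\alpha$ in $N$, so the obstructions automatically vanish in degrees $i < q$. The Gap Hypothesis gives $q_\alpha \geq \dim N_\alpha + 2$, and combining this with a Blakers--Massey argument parallel to the one used in the proof of \fullref{prop42} reduces the middle-dimensional obstructions (in the range $q \leq i \leq n-1$, where $n = \dim M$) to ordinary Bredon cohomology groups of the free part of the action, which vanish by the standard dimension count.

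The remaining and decisive difficulty is the top-dimensional obstruction in degree $n$. This is precisely the obstruction that is forced to be nonzero in Browder's counterexamples of \fullref{normStraight}; it measures an algebraic count of the preimages of $N^G$ in $M - \Int W$. The hypothesis that $f$ is an equivariant homotopy equivalence enters essentially here: since $f_*$ is an isomorphism on (equivariant) integral homology, equivariant Poincar\'e--Lefschetz duality applied to the pair map $(M - \Int W,\, \partial W)\to(N,\, N - N^G)$ forces this algebraic count to equal the corresponding count for the identity map of $N$, which is manifestly zero, so the top obstruction vanishes. The main obstacle in writing out the argument will be making this duality argument precise in the equivariant setting, drawing on the orientability hypothesis, the equivariant tangent bundle information supplied by Kawakubo's theorem as used in \fullref{prop51}, and a careful identification of the top obstruction class with a relative equivariant intersection number.
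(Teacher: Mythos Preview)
Your approach differs fundamentally from the paper's, and the step where you claim the middle-dimensional obstructions vanish has a real gap. You want to compress $f\co(M-\Int W,\partial W)\to (N,N-N^G)$ rel $\partial W$ into $N-N^G$, and you assert that for $q\le i\le n-1$ the obstructions ``vanish by the standard dimension count''. But there is no such count: the orbit space $(M-\Int W)/G$ is an $n$--manifold with boundary and does \emph{not} deformation retract onto $\partial W/G$, so the relative groups $H^i\bigl((M-\Int W)/G,\partial W/G;\,\pi_i(N,N-N^G)\bigr)$ have no reason to be zero in this range. The parallel you draw with \fullref{prop42} is misleading: there the domain of the compression was a collar $T_\alpha\cong\partial_1T_\alpha\times[0,1]$, and the vanishing came precisely from that product structure, not from Blakers--Massey or a dimension bound. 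Your top-dimensional duality argument is plausible in spirit, but it never comes into play because the earlier obstructions have not been dispatched.

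The paper avoids obstruction theory entirely at this step and argues geometrically. It first makes $f$ equivariantly transverse to $N^G$ (away from the neighborhood where $f$ is already isovariant), so that the nonisovariant set becomes a closed smooth submanifold $V=\coprod_\alpha V_\alpha$ with $V_\alpha=f_1^{-1}(N_\alpha)$ and $\dim V_\alpha=\dim M_\alpha<\tfrac12\dim M$. The homotopy-equivalence hypothesis is then used concretely: composing $f_1|V_\alpha$ with a homotopy inverse $h_\alpha\co N_\alpha\to M_\alpha$ shows that the inclusion $V_\alpha\hookrightarrow M$ is equivariantly homotopic to a map into $M_\alpha$. General position (in the Gap Hypothesis range) upgrades this homotopy to an equivariant isotopy carrying $V_\alpha$ into a tubular neighborhood of $M_\alpha$, and the Isotopy Extension Theorem then says the original $V_\alpha$ already lies in some tubular neighborhood. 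This is exactly the hypothesis of part (iii) of \fullref{thm2}. The moral is that the homotopy inverse tells you \emph{where} the bad set wants to go, and embedding theory in the middle-dimensional range lets you move it there --- this geometric use of the inverse is the missing idea in your obstruction-theoretic outline.
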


\begin{prop}\label{prop53}
If $f$ as above is isovariant, then $f$ is an isovariant homotopy
equivalence.
\end{prop}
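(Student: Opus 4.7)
The plan is to show that once $f\co M \to N$ has been deformed to an isovariant map (as provided by \fullref{prop51}, \fullref{prop52}, and part~(iii) of \fullref{thm2}), all of the ingredients needed to build an isovariant homotopy inverse are present. By that point we may assume $f$ is isovariant, restricts to a homotopy equivalence $f^G\co M^G \to N^G$, and (via the normal straightening of \fullref{prop41}) restricts to a bundle map $D(M^G) \to D(N^G)$ whose fiberwise behavior is a $G$--fiber homotopy equivalence of equivariant sphere bundles $S(\xi_\alpha) \to S(f_\alpha^*\omega_\alpha)$.

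First I would decompose $M = D(M^G) \cup_{S(M^G)} M_0$ and $N = D(N^G) \cup_{S(N^G)} N_0$ with $M_0$, $N_0$ the closed complements of open equivariant tubular neighborhoods; the action on $M_0$ and $N_0$ is free. The normal straightening, combined with the fact that $f^G$ is a homotopy equivalence and each fiber map is a fiber homotopy equivalence, implies that $f$ restricts to equivariant homotopy equivalences $D(M^G)\to D(N^G)$ and $S(M^G)\to S(N^G)$. Since $f\co M\to N$ is an equivariant homotopy equivalence and the inclusions $S(M^G)\subset D(M^G)$ and $S(M^G)\subset M_0$ are $G$--cofibrations (similarly for $N$), the equivariant gluing lemma for pushouts forces the restriction $f|M_0\co M_0 \to N_0$ to be an equivariant homotopy equivalence. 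Because the action on $M_0$ and $N_0$ is free, this is equivalent to $M_0/G \to N_0/G$ being an ordinary homotopy equivalence, and the restriction at the boundary $S(M^G)/G$ remains a homotopy equivalence.

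Next I would assemble an isovariant homotopy inverse $g\co N\to M$. On $D(N^G)$ take the fiberwise inverse of the bundle map, extended radially; on $N_0$ take an equivariant homotopy inverse to $f|M_0$. The two choices agree up to equivariant homotopy on the common boundary $S(N^G)$, and the $G$--homotopy extension property for the cofibration $S(N^G)\subset N_0$ allows us to alter the inverse on $N_0$ so that the two pieces match exactly. Gluing then produces a genuinely isovariant $g$. To verify that $gf$ and $fg$ are isovariantly homotopic to the identity, I would apply the Dula--Schultz obstruction machinery used in the proof of \fullref{prop42}: the obstructions to lifting equivariant homotopies to isovariant ones again live in the Blakers--Massey/Bredon groups $\brh^i(\mathbf{T};\varPi_i)$ for the relevant diagram of pairs, and they vanish in the Gap Hypothesis range because each stratumwise map $gf$ and $fg$ is already an equivariant homotopy equivalence on the fixed set and on the complement, so the relevant relative homotopy groups of pairs are trivial in the required dimensions.

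The main obstacle I anticipate is the compatibility step in the middle of paragraph two, namely ensuring that the pieces of $g$ can be made to agree strictly on $S(N^G)$ without destroying the isovariance near the fixed set. The fiber homotopy inverse is only determined up to fiber homotopy, while the equivariant inverse on $N_0$ is only determined up to equivariant homotopy, so one has to choose both compatibly before gluing. This is a standard application of the equivariant HEP plus the fact that $S(N^G)\hookrightarrow N_0$ is a free $G$--cofibration, but it is the step where a sloppy treatment would fail. Once this compatibility is arranged, the remainder reduces to the obstruction calculation already carried out in \fullref{prop42}, so no new geometric input beyond what was developed in Sections \ref{Prelim} and \ref{normStraight} is required.
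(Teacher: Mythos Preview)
Your argument and the paper's share the same first half: both normally straighten $f$, decompose $M$ and $N$ along the tubular neighborhoods, and establish that $f$ restricts to a homotopy equivalence on $D(M^G)$, on $S(M^G)$, and (by a gluing/excision argument) on the free complement $M_0\to N_0$.  Where you diverge is in the final step.  The paper does not build an explicit isovariant inverse; instead it invokes the isovariant Whitehead Theorem of \cite[Section~4]{DuS}, which says that an isovariant map inducing a homotopy equivalence $M-M^G\to N-N^G$ is automatically an isovariant homotopy equivalence.  The paper then verifies this hypothesis by the exact-sequence argument you also describe.  This is considerably shorter than your route and avoids the compatibility and gluing issues you yourself flag as the main obstacle.

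Your alternative---assembling $g$ piecewise and then checking that $gf$ and $fg$ are \emph{isovariantly} homotopic to the identity via obstruction theory---is in principle workable, but the last paragraph is not yet a proof.  The obstruction calculation in \fullref{prop42} concerns deforming a single equivariant map to an isovariant one; the problem of producing an \emph{isovariant homotopy} between two given isovariant maps is a different (though related) lifting problem, with coefficients in different relative homotopy groups.  Your claim that these groups vanish ``because each stratumwise map $gf$ and $fg$ is already an equivariant homotopy equivalence'' is essentially the statement of the isovariant Whitehead Theorem, not an independent argument for it.  So either you must really carry out that obstruction calculation (which amounts to reproving the Whitehead theorem in this special case), or you should simply cite it, as the paper does.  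Once you do the latter, your explicit construction of $g$ becomes unnecessary.
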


We shall prove these results in order.

\begin{proof}[Proof of \fullref{prop52}]
The first step is to construct an equivariant homotopy from $f$
to a mapping $f_1$ such that the homotopy is fixed on a neighborhood
of the fixed point set and $f_1$ is smoothly equivariantly transverse 
to the fixed point set; there are no problems with equivariant 
transversality obstructions because the relevant part of the domain 
has a free $G$--action.  By construction the transverse inverse image 
of the fixed point set is the set of nonisovariant points, and it 
splits into a union of smooth submanifolds $V_\alpha = f_1^{-1}
(N_\alpha)$.  Note that $\dim V_\alpha = \dim N_\alpha = \dim
M_\alpha$ which is less than half the dimensions of $M$ and $N$.

By construction the image of $f_1|V_\alpha$ is contained in  
$N_\alpha$, so let $k_\alpha$ be the associated map from 
$V_\alpha$ to $N_\alpha$;  if $h_\alpha\co N_\alpha\to M_\alpha$ 
is determined by a homotopy inverse to $f_1$, then the map 
$h_\alpha\tinycirc k_\alpha$ is equivariantly homotopic to the 
inclusion of $V_\alpha$ in $M$. By general position it follows
that the latter is also equivariantly homotopic to a map into
$D(M_\alpha) - M_\alpha$ and in fact can be approximated by a
smooth equivariant embedding $e_\alpha$; in fact, the numerical 
condition in the Gap Hypothesis is strong enough to guarantee 
that $e_\alpha$ is equivariantly isotopic to the inclusion.  
Since the image of $e_\alpha$ is contained in a tubular neighborhood
of $M_\alpha$, the Equivariant Isotopy Extension Theorem implies
the inclusion is isotopic to a smooth equivariant embedding of 
$V_\alpha$ in a tubular neighborhood and hence the image of the
inclusion itself must also be contained in some tubular 
neighborhood.  Since this is true for every $\alpha$, it is
also true for the entire set of nonisovariant points.
\end{proof}

By \fullref{thm2} and the preceding propositions we know 
that $f$ is equivariantly homotopic to an almost isovariant 
mapping, and by \cite{DuS} it is also equivariantly homotopic 
to an isovariant mapping.

\begin{proof}[Proof of \fullref{prop53}]
By \cite[Proposition 4.1, page 27]{DuS}, the map $f$ is 
isovariantly homotopic to a map $f_0$ such that for each 
$\alpha$ we have $f(\X D(M_\alpha)\,)\subset D(N_\alpha)$,
$f(\X S(M_\alpha\,))\subset S(N_\alpha)$, and
$$f\left(\X M - \cup_\alpha~{\text{\rm Int\,}} D(M_\alpha)\X
\right)~~\subset~~ N - \cup_\alpha~{\text{\rm Int\,}} D(N_\alpha)~.$$
Furthermore, using \cite[Theorem~4.4, pages~29--31]{DuS} one can
further deform this map to some $f_1$ that is fiber preserving on the 
tubular neighborhoods and maps disk fibers to disk fibers by cones 
of maps over the boundary spheres (that is, the map is
\emph{normally straightened} in the sense of \cite[page 31]{DuS}).
It will suffice to prove that $f_1$ is an isovariant homotopy
equivalence, so without loss of generality we might as well 
assume that $f$ itself is normally straightened.

By the isovariant Whitehead Theorem established in Section 4
of \cite{DuS}, the map $f$ is an isovariant homotopy
equivalence if $f$ defines a homotopy equivalence from $M-M^G$ to
$N-N^G$.  General position considerations imply that $f$ induces an
isomorphism of fundamental groups, and therefore it suffices to check 
that $f$ defines an isomorphism in homology with twisted coefficients 
in the group ring of the fundamental group.  Exact sequence and excision
arguments show that
the latter holds if $f$ induces homotopy equivalences from $M$ to $N$,
from $M^G$ to $N^G$, and from $\coprod S(\xi_\alpha)$ to 
$\coprod S(\omega_\alpha)$.  The first two of these follow 
because $f$ is an 
equivariant homotopy equivalence.  To prove the third property first 
note that for each $\alpha$ the homotopy fibers of $S(\xi_\alpha)\subset 
D(\xi_\alpha)$ and $S(\omega_\alpha)\subset D(\omega_\alpha)$ are simply 
the fibers of the sphere bundles; since each $D(\xi_\alpha)$ maps to 
$D(\omega_\alpha)$ by a homotopy equivalence, it suffices to know that 
a fiber of $S(\xi_\alpha)$ maps to a fiber of $S(\omega_\alpha)$ with 
degree $\pm 1$.  This follows directly from the construction of the 
isovariant map; the first step was to make an equivariant homotopy 
equivalence normally straightened near the fixed point set, and the 
equivariant deformation in part (iii) of \fullref{thm2} is constant
near some fiber of $S(\xi_\alpha)$.
\end{proof}

\subsection*{Recognizing isovariant homotopy equivalences}

One can combine \fullref{prop53} with \fullref{thm1} 
to obtain the following specialization of the Isovariant Whitehead 
Theorem: 

\begin{thm}\label{thm54}
Let $f\co M\to N$ be a continuous isovariant
mapping of oriented closed semifree smooth $G$--manifolds that 
satisfy the Gap Hypothesis.  Then $f$ is an isovariant homotopy
equivalence if and only if $f$ is an ordinary homotopy equivalence
(forgetting the group action), and the associated map of fixed 
point sets $f^G\co M^G\to N^G$ is also a homotopy equivalence.  
\end{thm}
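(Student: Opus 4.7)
The forward implication is formal: an isovariant homotopy equivalence is in particular equivariant and a homotopy equivalence of the ambient spaces, and it restricts to an ordinary homotopy equivalence on each fixed point set, in particular on $M^G \to N^G$.

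For the converse I would reduce the statement to \fullref{prop53}. The first step is to upgrade $f$ from an ordinary homotopy equivalence to an equivariant homotopy equivalence. Since the $G$-actions on $M$ and $N$ are semifree, the only possible isotropy subgroups are $G$ itself and the trivial subgroup. The equivariant Whitehead theorem for $G$-CW complexes, applied via an equivariant smooth triangulation of the manifolds, asserts that a $G$-map is an equivariant homotopy equivalence provided it induces an ordinary homotopy equivalence on the $H$-fixed point set for every isotropy subgroup $H$. In the semifree case this amounts to checking $H = \{e\}$ and $H = G$, both of which are hypothesized.

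With $f$ now known to be both isovariant and an equivariant homotopy equivalence, I would invoke \fullref{prop53} directly; its hypotheses are in hand and its conclusion is exactly that $f$ is an isovariant homotopy equivalence. A small point of caution is that \fullref{prop53} is stated for a continuous isovariant $f$ with no further regularity, but its proof begins by deforming any such $f$ to the normally straightened form using \cite[Proposition 4.1 and Theorem 4.4]{DuS}, and this incurs no extra cost here. The rest of that argument, namely the reduction via the isovariant Whitehead theorem of \cite{DuS} to the requirement that $f$ restrict to a homotopy equivalence $M - M^G \to N - N^G$, followed by the exact-sequence and excision reduction to equivalences on $M$, on $M^G$, and on the sphere bundles of the equivariant normal bundles, applies without modification.

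The most delicate input is the sphere bundle equivalence $\coprod S(\xi_\alpha) \to \coprod S(\omega_\alpha)$, which in the proof of \fullref{prop53} is handled by appealing to the fact that normal straightening produces a map whose restriction to a fiber of each sphere bundle has degree $\pm 1$. In our setting the fiber degree information is supplied by combining the equivariant homotopy equivalence established above with \fullref{prop51}, which verifies the equivariant normal bundle hypothesis of \fullref{thm2}(i), so normal straightening gives the required fiberwise degree and the argument closes.
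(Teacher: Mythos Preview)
Your proof is correct and follows essentially the same route as the paper: invoke the Equivariant Whitehead Theorem (via $G$--CW structures on smooth $G$--manifolds, after Illman) to upgrade the two ordinary homotopy equivalences to an equivariant homotopy equivalence, and then apply \fullref{prop53}. The paper's proof is just these two sentences.

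Your last two paragraphs are extra caution about whether \fullref{prop53} really applies to an \emph{arbitrary} isovariant equivariant homotopy equivalence rather than to the specific isovariant deformation produced in the course of the paper; this is a legitimate worry, since the final sentence of the proof of \fullref{prop53} does appeal to that construction. However, your proposed fix via \fullref{prop51} is a bit roundabout. A cleaner way to close the gap is to observe that the necessity argument for \fullref{thm2}(i) already applies to your $f$ (it is isovariant and has degree $1$), and that argument directly gives degree $\pm 1$ on the sphere-bundle fibers after normal straightening. With that substitution your verification is tight, and in any case the overall architecture matches the paper.
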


\begin{proof}[Proof of \fullref{thm54}]
The necessity of the conditions is immediate, so we only need
to check that the latter are also sufficient.
Since smooth $G$--manifolds have the equivariant homotopy types
of $G$--CW complexes (cf Illman \cite{IL2}), the Equivariant 
Whitehead Theorem of Illman \cite{IL1} and T Matumoto \cite{Ma} 
and the hypotheses imply that $f$ must be an equivariant homotopy 
equivalence.  Since $f$ is an isovariant map, it follows from 
\fullref{prop53} that $f$ is an
isovariant homotopy equivalence.
\end{proof}

\begin{rem}[Extension to nonsemifree actions]
We have mentioned that \fullref{thm1} extends to at least some smooth
actions that are not semifree, and in fact the same is true for
\fullref{prop53}.  Given these, one can also extend \fullref{thm54}
provided the final clause about $f^G$ is replaced with the 
following:  \emph{\ldots and for each subgroup $H\subset G$ the
associated map of $H$--fixed point sets $f^H\co M^H\to N^H$ is also
an ordinary homotopy equivalence.\/} (Note: since the group $G$ 
acts trivially on its fixed point set, there is no difference 
between equivariant and ordinary homotopy equivalences of the
singular sets --- that is, the points where the action is
not free --- in the semifree case.)
\end{rem}

\section{Final remarks}\label{FinRem}

Questions about the role of the Gap Hypothesis in transformation
groups have been around for some time (cf Schultz \cite{ScE}).  Such
questions rarely have clear cut answers, but the main results
of this paper strongly suggest that the usefulness of the Gap 
Hypothesis is closely related to 

\begin{enumerate}
\item the strong implications of isovariance for 
analyzing existence and classifications questions for group actions,

\item some very close relationships between isovariant
homotopy and equivariant homotopy when the Gap Hypothesis holds.
\end{enumerate}

\noindent
We shall discuss a few aspects of these points in this section.

\subsection*[Extending \ref{thm1} to other cases]
{Extending \fullref{thm1} to other cases}

Since the Gap Hypothesis was used at several crucial points in
the proof of our main theorems, one might reasonably expect that
these results do not necessarily hold if the Gap Hypothesis fails.
Despite this, there are some situations in which one can prove
analogs of \fullref{thm1}, particularly when $G$ is cyclic 
of prime order and  the difference $\dim M - 2\dim M^G$ is equal 
to 1 or 0.  In particular,
if we also assume that $G$ is cyclic of prime order and there is
only one component with maximal dimension, then one can use surgery
theory to prove generalizations of \fullref{thm1}.  If $G$ has order 2
and the dimension difference is zero, then this is established in 
\cite{Sc2}, and the other cases will be shown in a forthcoming paper by
K H Dovermann and the author.  On the other hand, the previously
quoted results of results of Longoni and Salvatore \cite{LS} imply that 
the analog of \fullref{thm1} does not necessarily hold if $G$ has 
order 2 and $M$ is not simply connected.

In a subsequent paper we shall use equivariant function spaces as 
in \cite{Sc1} and \cite{BeS} to construct systematic families of 
equivariant homotopy equivalences that are not homotopic to
isovariant maps in situations where the Gap Hypothesis fails.
In particular, we shall construct connected examples where $G$ 
is cyclic of prime order, $\dim M = 2\dim M^G$, and there are 
two components with the maximal dimension.

\subsection*{Implications for equivariant surgery}

The methods and results of \cite{DuS} provide a means for 
analyzing isovariant homotopy theory --- and its relation to 
equivariant homotopy theory --- within the standard framework 
of algebraic topology.  Therefore \fullref{thm1} and the 
conclusions of \cite{DuS} suggest a two step 
approach to analyzing smooth $G$--manifolds within a given equivariant 
homotopy type if the Gap Hypothesis does not necessarily hold; namely, 
the first step is to study the obstructions to isovariance for an 
equivariant homotopy equivalence and the second step is to study one 
of the versions of the isovariant surgery theories in \cite{Sc3} or 
\cite{We}.  This approach seems especially promising for analyzing 
classification questions by means of surgery theory and homotopy theory.

There are also other indications of very close ties between the validity
of \fullref{thm1} and potential extensions of equivariant surgery to
cases outside the Gap Hypothesis range.  The extensions of \fullref{thm1}
mentioned in the preceding paragraphs use results on equivariant surgery
just outside the Gap Hypothesis range, and in particular they depend
heavily on some corresponding extensions of results from equivariant
surgery.  Of course, one would also like to have more homotopy theoretic
proofs for such results.  On the other hand, the counterexamples to
\fullref{thm1} in certain cases also imply that one cannot expect to
have purely algebraic obstructions to equivariant surgery (based upon
data like the equivariant fundamental group) in some situations that
are very close to those considered by Dovermann \cite{D0} or by A Bak
and M Morimoto \cite{BM1,BM2}.  This seems to reflect the failure of the
equivariant $\pi-\pi$ theorem (compare Dovermann--Rothenberg \cite{DR})
in certain cases where the Gap Hypothesis is not valid (see the last
section of Chapter I in Dovermann--Schultz \cite{DoS} for quite different
examples along these lines).

\subsection*{A dual approach}

The results of this paper also raise questions about \emph{a priori}
descriptions for isovariant homotopy types of semifree action in 
equivariant terms when the gap hypothesis holds.  \fullref{thm54} is
a simple observation in this direction.  Another way of looking
at the question is to consider the implications of the main theorem
for a dual approach to the problem in which one begins by trying to
work with the free parts of the group actions on the manifolds;
suh an approach might be particularly useful if one wishes to 
generalize the main result to nonsmoothable actions using tools
from controlled topology.  If we are given two finite equivariant CW
complexes $M$ and $N$ with semifree actions of the group $G$ that
are equivariantly homotopy equivalent and their orbit spaces are 
given by $M^*$ and $N^*$ respectively, then purely formal considerations 
imply that the one point compactifications of $M^* - M^G$ and $N^*- N^G$
are homotopy equivalent because these spaces are homeomorphic to the
quotient spaces $M^*/M^G$ and $N^*/N^G$ respectively.  If $M$ and $N$ 
are smooth manifolds and the Gap Hypothesis holds, then \fullref{thm1} implies that this canonical homotopy equivalence of one point
compactifications comes from a proper homotopy equivalence between
$M^* - M^G$ and $N^*- N^G$ and this mapping can be compactified
in the sense of Quinn's work on ends of maps \cite{Q}.  A good 
understanding of this from the viewpoint of controlled topology 
as in \cite{Q} would be extremely useful for studying possible
extensions of \fullref{thm1} to group actions that are not 
necessarily smooth but are still somehow well behaved topologically.

\newpage
\bibliographystyle{gtart}
\bibliography{link}

\end{document}